\documentclass[11pt]{amsart}

\usepackage[english]{babel}
 \usepackage{enumerate}
 \usepackage{xcolor}
\usepackage{amscd}
\usepackage{amsthm}
\usepackage[centertags]{amsmath}

\usepackage{newlfont}
\usepackage{graphicx}
\usepackage{amsfonts, amssymb,accents}
\usepackage{mathrsfs}
\usepackage{latexsym}
\usepackage{bbold}
\usepackage{enumitem}
  \definecolor{colorcita}{RGB}{21,86,130}
  \definecolor{colorref}{RGB}{5,10,177}
  \definecolor{colorweb}{RGB}{177,6,38}
 \usepackage[colorlinks=true,linkcolor=colorref,citecolor=colorcita,urlcolor=colorweb]{hyperref}

\newtheorem{theorem}{Theorem}[section]

\newtheorem{proposition}[theorem]{Proposition}
\newtheorem{corollary}[theorem]{Corollary}
\newtheorem{lemma}[theorem]{Lemma}
\newtheorem{remark}[theorem]{Remark}
\newtheorem{definition}[theorem]{Definition}


\newcommand{\zN}{\mathbb N}
\newcommand{\zK}{\mathbb K}

\newcommand{\sub}{\subseteq}


\newcommand{\F}{\mathcal {F}}

\fboxrule0.0001pt \fboxsep0pt

\textwidth=17.4cm \textheight=23cm \hoffset=-20.5mm \voffset=-5mm
\parskip 7.2pt

\title{Frequently recurrent backward shifts}
\keywords{Frequently recurrent operators,  hypercylic operators, weighted shifts}
\subjclass[2010]{
47A16, 
    37B20  	
	47B37  	
  37A45  
	47A35   	
}

\author{Rodrigo Cardeccia, Santiago Muro}
\address{Instituto Balseiro, Universidad Nacional de Cuyo – C.N.E.A. and CONICET, San Carlos de Bariloche, Rep\'ublica Argentina} \email{rodrigo.cardeccia@ib.edu.ar} 
\address{FCEIA, UNIVERSIDAD NACIONAL DE ROSARIO AND   CIFASIS, CONICET}
\email{muro@cifasis-conicet.gov.ar}

\thanks{Partially supported by ANPCyT PICT 2015-2224, UBACyT 20020130300052BA, PIP 11220130100329CO and CONICET}

\begin{document}
\begin{abstract}
 We study  frequently recurrent backward shift operators on Fréchet sequence spaces. We prove that if a backward shift admits a nonzero frequently recurrent vector, then it supports a dense set of such vectors, and hence the operator is frequently recurrent. As a consequence, we provide two different characterizations for frequently recurrent backward shift operators,
which also imply the dense lineability of the respective set of
frequently recurrent vectors for this class of operators.

    For the construction of frequently recurrent vectors it is important to consider some arithmetic structure on the sets of return times along with their positive lower density. To achieve this, we introduce the recurrent arithmetic thickening of a sequence of subsets of $\zN$. We show that they provide an extension, that takes into account lower density, of a well-known result of Furstenberg  determining that sets of return times contain $IP$-sets. 
\end{abstract}

\maketitle

\section*{Introduction}

The central notion in the dynamics of linear operators is that of hypercyclicity. A linear operator \( T \) on a Fr\'echet space $X$ is said to be hypercyclic if it possesses a dense orbit, i.e. the set \( \{T^n x : n \in \mathbb{N}\} \) is dense in the space for some \( x \in X \).
Furthermore, if an orbit visits each nonempty open set with visiting times that grow at most linearly (or equivalently, the sets of visiting times have positive lower density), then the operator is called frequently hypercyclic. Since its introduction in \cite{BayGri06}, frequent hypercyclicity has been a fundamental concept in the development of linear dynamics.

The class of weighted shift operators (unilateral and bilateral) constitutes one of the most fundamental classes of operators and serves as a rich source of examples and counterexamples in linear dynamics; see for example, \cite{BayGri07,BayRuz15,Bes16,BonGro18,ChaErnMen16,charpentier2019chaos}. The characterization of hypercyclic weighted shifts is well-known, and in \cite{BayRuz15}, a characterization of frequently hypercyclic weighted shift operators on \(\ell_p\) and \(c_0\) was presented (see also \cite{BonGro18,charpentier2019chaos} for extensions to more general Fréchet sequence spaces).

While recurrence is a classical research topic in  dynamical systems, only recently in \cite{CosManPar14}, a systematic study on recurrence for linear operators was initiated. This line of research has been intensive in recent years \cite{abakumov2024orbits,BonGroLopPer22JFA,CardeMur22,CardeMur22multipleScand,GriLop23,GriLop25,lopez2024recurrent,LopMen25}. A vector \( x \) is (frequently) recurrent if the set of visiting times to each neighborhood of \( x \) is infinite (and has positive lower density). An operator is (frequently) recurrent if it has a dense set of (frequently) recurrent vectors.

Returning to weighted shifts, a result due to Chan and Seceleanu \cite{ChaSec12} implies that on \(\ell_p(\mathbb{N})\) and \(\ell_p(\mathbb{Z})\) for \(1 \leq p < \infty\), every recurrent weighted shift is hypercyclic and moreover, the existence of a single nonzero recurrent vector is sufficient to establish hypercyclicity. In particular, we have a zero-one type law for the set of nonzero recurrent vectors of a weighted shift: it is either empty or dense in the space. These zero-one laws properties  were extended to general Fr\'echet sequence spaces \cite{bonilla2025zeroEdinburgh,he2018jf-class} and investigated in other contexts, like adjoint of multipliers on function spaces \cite{ChSe10,bonilla2025zeroEdinburgh},   $\Gamma$-supercyclicity \cite{abakumov2024orbits} or chain recurrence for weighted shifts \cite{lopez2025shifts}. It is worth noting that zero-one laws do not hold for weighted shifts on directed trees \cite{abakumov2024several} or composition operators \cite{ChSe10}. 


In this article, we prove a zero-one law for frequently recurrent backward shifts and provide several applications. We will need to construct new recurrent vectors from a given one.
A well known property on (non-linear) dynamical systems is that the set of visiting times of a recurrent vector possesses a rich arithmetic structure. Indeed, it was proved by Furstenberg (see \cite[Theorem 2.17]{Fur81}) that they always contain $IP$-sets. Recall that a subset  $A\subset\mathbb N$ is an $IP$-set if there are natural numbers $p_1\le p_2\le\dots$ such that $A$ consists of
the numbers $p_i$ together with all finite sums $p_{i_1}+\dots + p_{i_k}$ with
$i_1<\dots<i_k$. Thus, if we want to construct a recurrent vector by pasting an appropriate sequence of vectors, we must do it carefully so that we respect the arithmetic structure. When we study frequent recurrence, there is a new issue: this arithmetic structure should somehow care about the lower density of the sets involved. Indeed, an inspection of the proof of   \cite[Theorem 2.17]{Fur81}, shows that the $IP$-set constructed there does not need to have positive lower density, no matter the properties of the recurrent vector. 



To solve this problem  we introduce what we call the \emph{recurrent arithmetic thickening} ($RAT$ from now on) of a sequence $(A_p)_p$ of subsets of $\zN$, denoted by $RAT((A_p)_p)$.  We show that these sets appear naturally in the context of recurrence: the sets of return times of recurrent points (on any dynamical system) always contain a $RAT$-set. The recurrent arithmetic thickening is always an $IP$-set and preserves the positivity of the lower density, so it becomes crucial for our  constructions of frequently recurrent vectors. The introduction of $RAT$-families allows us to prove an extension of \cite[Theorem 2.17]{Fur81} to arbitrary families.

The main results of the article are as follows: if a backward shift supports a non-trivial frequently recurrent vector, then there must be a dense set of such vectors. Equivalently, the operator is frequently recurrent. Moreover, using the idea of the construction of the dense set of frequently recurrent vectors, and by an application of the zero-one law, we can provide two different characterizations for frequent recurrence: the first one is in the same vein as the characterization of frequently hypercyclic shifts obtained in \cite{BonGro18}, while the second one is related to the existence of orbits that frequently approximate ``big'' subsets of the space $X$. We summarize our main results in a single statement.

\begin{theorem}\label{main theorem}
Let $X$ be a Fr\'echet sequence space with unconditional basis $(e_n)_{n\in \mathbb N_0}$ (resp. $(e_n)_{n\in \mathbb Z}$), let $B$ be the respective backward shift operator on $X$ and assume that it is continuous. 
 The following are equivalent:
\begin{enumerate}
    \item $B$ supports a nonzero frequently recurrent vector.
       \item $B$ is frequently recurrent.
    \item There is a dense vector subspace in which every nonzero vector is frequently recurrent and cyclic for $B$.
     \item  There are $(\varepsilon_p)_{p\ge 0}\subset \mathbb R_+$ with $\varepsilon_p\to 0$, and sets of positive lower density $A_p$  
     such that: 
\begin{enumerate}
    \item [i)]for every $p\in\mathbb N_0$, $\sum_{n\in A_p}e_{n+p}$ converges; 
    \item [ii)]
    for every $q\in\mathbb N_0$ and every  $m\in A_q$,
    $$
    \quad\Big\|\sum_{\overset{N\in RAT((A_p)_p)}{N>m}} e_{N-m}\Big\|_q<\varepsilon_q, \qquad \Big(\textrm{resp. } \Big\|\sum_{\overset{N\in RAT((A_p)_p)}{N\neq m}} e_{N-m}\Big\|_q<\varepsilon_q\Big).
    $$
        \end{enumerate}
   \item There are a set $A\subseteq \mathbb N_0$ of positive lower density and $x\in X$ with $supp(x)\subseteq A$, such that $x$ is a frequently hypercyclic vector for $\mathfrak B_1(A):=\{y\in X: supp(y)\subseteq A \text{ and } |y_n|<1 \text { for every } n\geq 0\}$. 
    
\end{enumerate}
\end{theorem}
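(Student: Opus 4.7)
The plan is to establish the cycle (1) $\Rightarrow$ (4) $\Rightarrow$ (3) $\Rightarrow$ (2) $\Rightarrow$ (1), together with (4) $\Leftrightarrow$ (5). The implications (2) $\Rightarrow$ (1) and (3) $\Rightarrow$ (1) are immediate by definition, and (3) $\Rightarrow$ (2) follows from density: once a dense set of frequently recurrent vectors exists, frequent recurrence forces hypercyclicity via the Chan--Seceleanu-type zero-one laws cited in the introduction, hence the set of cyclic vectors is residual and meets the set of frequently recurrent vectors. So the work is concentrated in the implication (1) $\Rightarrow$ (4).

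For (1) $\Rightarrow$ (4), I would start with a non-zero frequently recurrent $x=\sum_n x_n e_n$ and exploit the rigidity its return times impose on the coefficient sequence. Put $A=\{N:\|B^N x-x\|_{q_0}<\varepsilon_0\}$ for a sufficiently small $\varepsilon_0$ and sufficiently large $q_0$; this set has positive lower density and we may include $0$. The condition $B^N x\approx x$ means $x_{n+N}\approx x_n$ coordinatewise, so the sequence is self-similar along translates by $A$. Iterating, for $N_0,\dots,N_p\in A$ the block $\sum_{j=0}^p e_{n-m+N_j}$ is exactly what arises from composing $p+1$ approximate returns. The nested subsets $A_p$ and scales $\varepsilon_p\to 0$ are then extracted by a pigeonhole/filter refinement: repeatedly intersecting $A$ with finer return-level sets obtained from iterates of $B$ preserves positive lower density, and unconditionality of the basis lets one translate the scalar coordinatewise estimates into the seminorm estimate (ii).

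For (4) $\Rightarrow$ (3), the vector $y:=\sum_{p\ge 0}\sum_{n\in A_p}e_{n+p}$ converges in $X$ by (i), and (ii) is exactly the estimate needed to show that for each $N_k\in A$, $B^{N_k}y$ lies within $\varepsilon_q$ of $y$ in the seminorm $\|\cdot\|_q$, so that $A\subseteq \{N:\|B^N y - y\|_q<\varepsilon_q\}$ witnesses frequent recurrence of $y$. Density is then obtained by adding to $y$ a finite linear combination of basis vectors (shifted far enough so the support of the tail does not interfere), again using unconditionality. The equivalence (4) $\Leftrightarrow$ (5) is a reformulation: condition (ii) is precisely the quantitative statement that a single orbit along $B$ can frequently $\varepsilon_q$-approximate every coordinate pattern inside the ``box'' $\mathfrak{B}_1(A)$, which is the working definition of frequent hypercyclicity relative to $\mathfrak{B}_1(A)$.

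The principal obstacle is the refinement step in (1) $\Rightarrow$ (4): starting from a single frequently recurrent vector, one only has uniform return control at a single scale, while (ii) demands summability over all scales $\varepsilon_p\to 0$ simultaneously. The hard part will be to design the filter of intersections that produces the nested family $(A_p)$ with positive lower density at every level, while keeping enough uniformity to sum the triple series in (ii); this is a Furstenberg-style combinatorial argument on density sets, coupled with unconditional-basis bookkeeping that justifies rearranging the sum into single-scale pieces controlled by a geometric sequence $\varepsilon_p$.
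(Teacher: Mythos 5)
Your overall architecture (reduce everything to a zero-one-law-type implication plus two characterizations) matches the paper's, but three of your steps have genuine gaps. First, your argument for $(3)\Rightarrow(2)$ does not work: from frequent recurrence you can conclude (via the recurrent-implies-hypercyclic results for shifts) that the set of hypercyclic vectors is residual, but the set of frequently recurrent vectors is only known to be \emph{dense}, and a dense set need not meet a residual set (it can be meager and disjoint from it -- compare the frequently hypercyclic vectors, which form a dense meager set in general). The paper instead produces a cyclic frequently recurrent vector \emph{explicitly} in Theorem \ref{cyclic and frequently recurrent vector}: it takes the vector $x\in\mathfrak B_1(A)$ that is frequently hypercyclic for $\mathfrak B_1(A)$ from condition $(5)$, observes it is frequently recurrent, and proves cyclicity by covering $[0,N]$ with finitely many translates $A-n_l$ and approximating each piece of a finitely supported target by $MB^{n_l+k_l}(x)$. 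Second, your candidate vector for $(4)\Rightarrow(3)$, namely $y=\sum_p\sum_{n\in A_p}e_{n+p}$, is not frequently recurrent under hypothesis (ii): condition (ii) controls the triple sum $\sum_p\sum_{n\in A_p,n>m}\sum_{j=0}^p e_{n-m+N_j}$, and that triple sum is the shape of the \emph{tail of the recurrent vector itself}. The vector must be built inductively as $y=e_0+\sum_k\sum_{n\in A_{p_k}}\sum_{j=0}^k e_{n+N'_j}$, i.e.\ by pasting at each $n\in A_{p_k}$ a shifted copy of a truncation of the previously built vector, so that $B^m(y)$ for $m\in A_{p_k}$ reproduces an initial segment of $y$; with your $y$, $B^m(y)$ does not approximate $y$ near the origin at all.

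Third, in $(1)\Rightarrow(4)$ (equivalently the zero-one law, Theorem \ref{0-1 law frequent recurrence}, which is where the real work lies) your ``pigeonhole/filter refinement'' glosses over the one ingredient that cannot be improvised: the sets $A_p\subseteq\{n:\|B^nx-x\|_{k_p+m_p}<\varepsilon_p\delta/8C_p\}$ must be made pairwise disjoint with $|n-m|>k_p+k_q+2R$ while retaining positive lower density. The paper explicitly notes that the usual dynamical route to such separation (used for frequent hypercyclicity) fails here because a recurrent orbit only approximates a single vector, and it imports the purely combinatorial Separation Lemma \ref{separation lemma} of Martin--Menet--Puig instead. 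Without that separation the blocks $\sum_{j=0}^p e_{n+N_j+R}$ overlap and neither the convergence of the pasted vector nor the estimate (ii) can be established; moreover, the lower bound $|x_{n+N_j+R}|>\delta/4$ (obtained from the return condition and the choice of $q_p$, $k_p$) is what lets one dominate the triple sum by $\|B^m(x)-x\|_{m_q}$ via unconditionality. Finally, $(4)\Leftrightarrow(5)$ is not a mere reformulation: the forward direction interpolates a dense sequence $(y^p)\subset\mathfrak B_1(A)$ into the triple-sum pattern, and the reverse direction passes through the projection $P_{\mathfrak B_1(A)}x$ and then invokes the zero-one law again.
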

We refer to Section \ref{section RAT} for the definition $RAT((A_p)_p)$ and to Section \ref{subsection segunda aplicacion} for the definition of \emph{frequent hypercyclicity for $Y$}, where $Y \sub X$ is a given subset of $X$ (the latter was originally introduced in \cite{Gro19bilateral}).

The results presented in the above statement depend on each other: for the proof of the dense lineability result $(3)$  (which is proved in Theorem \ref{cyclic and frequently recurrent vector}) we have to use the zero-one law ($(1)\Rightarrow (2)$) together with  the  characterization $(5)$ (which in turn uses $(4)$). 

The structure of the paper is the following. In the first section we present some preliminaries.
In Section \ref{section unilateral} we prove the main theorem of the paper, Theorem \ref{main theorem}, in the unilateral case. We will first prove the zero-one law for the case $X=c_0$ in Theorem \ref{0-1 law frequent recurrence c0}.  Although this result is in fact a particular case of Theorem \ref{0-1 law frequent recurrence}, we have chosen to present it separately because its more elementary proof captures the main idea behind the construction of frequently recurrent vectors and motivates the introduction of the $RAT$. This construction  follows a ``self-similar'' process which, in the limit, converges to a frequently recurrent vector. 
We introduce the $RAT$ and  extend \cite[Theorem 2.17]{Fur81}   to arbitrary families in Section \ref{section RAT}. We also show how the $RAT$ naturally fits in the context of frequently recurrent operators.
This is applied to show the general case of the zero-one law, i.e. $(1)\Leftrightarrow (2)$ of Theorem \ref{main theorem}, in Section \ref{section 0-1 general case}.  A careful look at the construction of frequently recurrent vectors in the zero-one law, together with some properties of the recurrent arithmetic thickening will allow us to characterize frequently recurrent backward shift operators in Theorem \ref{Characterization frequent recurrence} ($ (4)$ in Theorem \ref{main theorem}). Applying this characterization together with the zero-one law 
we show, in Theorem \ref{thm: caract S_1(A)}, a second characterization for frequently recurrent backward shifts ($ (5)$ in Theorem \ref{main theorem}). In Theorem \ref{cyclic and frequently recurrent vector} we apply all the previous results to obtain dense lineability.
Since our characterization $(4)$ is weaker than the characterization of frequent hypercyclicity from \cite{BonGro18}, it is natural to consider two classes of operators that are formally weaker than frequently hypercyclic but stronger than frequently  recurrent.  In Corollary \ref{Teo: caract freq hyp on S1} and Proposition \ref{prop: condicion con el min} we study them and describe the dynamical notions that they determine.

Finally, in Section \ref{section:bilateral} we prove the  main Theorem \ref{main theorem} in the bilateral setting.

\section{Preliminaries and notation}\label{preliminaries}
Through the paper $X$ will denote a Fr\'echet sequence space over $\mathbb K=\mathbb R$ or $\mathbb C$, i.e. $X$ is a subspace of $\mathbb K^{\mathbb N_0}$ or of $\mathbb K^{\mathbb Z}$ such that the functionals $X\ni x=(x_n)_n\mapsto x_n$ are continuous. The topology of $X$ is defined by a sequence of seminorms $(\|\cdot\|_p)_{p\in \mathbb N_0}$. We always assume that for each $x\in X$, the mapping $p\mapsto \|x\|_p$ is non-decreasing. Equivalently, the topology of $X$ may be defined via an $F$-norm, $\|\cdot\|=\sum_{p=0}^\infty \frac{1}{2^{p+1}} \min\{1,\|\cdot\|_p\}$. In that case, $(X,\|\cdot\|)$ results a complete metric space with distance $d(x,y)=\|x-y\|.$ 
Sometimes it will convenient to refer to $[x]_n$ as the $n$-th coordinate of $x$, that is $x_n$. 

An unconditional basis in a Fr\'echet sequence space $X$ is a basis which satisfies that $(a_n\cdot x_n)_n\in X$ whenever $a\in \ell_\infty $ and $x=(x_n)_n\in X$. This implies that for every $p\ge 0$ there are $m_p\geq p$ and a constant $C_p\ge1$ such that 
\begin{equation}\label{incondicionalidad}
    \|ax\|_p\le C_p\|a\|_{\infty} \|x\|_{m_p}.
\end{equation} 
We will always assume that the $C_p$'s are increasing.

Recall that an operator $T$ on $X$ is said to be hypercyclic provided that it has a dense orbit. It is called recurrent if it has a dense set of recurrent vectors, that is, of vectors $x\in X$ such that the set of hitting times $N_T(x,U):=\{n\in\mathbb N: T^n(x)\in U\}$ is infinite for each neighbourhood $U$ of $x.$ 

A set $A\sub \mathbb N_0$ is said to have positive lower density provided that
$$\underline {dens}(A):=\liminf_{n\to \infty} \frac{\# A\cap [0,n]}{n+1}>0$$
and positive upper density if 
$$\overline {dens}(A):=\limsup_{n\to \infty} \frac{\# A\cap [0,n]}{n+1}>0.$$

The lower density is finitely invariant:  $\underline {dens}(A)=\underline {dens}(A\cap F)$ for every cofinite set $F$. The same holds for the upper density.

Given an operator $T$ on $X,$
a vector $x\in X$ is said to be \emph{frequently recurrent} provided that for every open set $U$ around $x$ the set of hitting times $N_T(x,U) = \{n \in \mathbb{N} : T^n(x) \in U\}$ has positive lower density. If the set of frequently recurrent vectors is dense in the space, then we will say that the operator is frequently recurrent. A vector $x\in X$ is called \emph{frequently hypercyclic } if 
$\underline {dens}(N_T(x,U))>0$ for every nonempty open set $U\sub X$. If $T$ has a frequently hypercyclic vector then $T$ is said to be frequently hypercyclic. We refer to the books \cite{BayMat09,GroPer11} for basic concepts on linear dynamics.

Although we will mostly work with frequently recurrent backward shift operators, it will be useful to have a characterization of frequent hypercyclicity at hand. The following theorem was proved by Bonilla and Grosse-Erdmann in \cite{BonGro18}.
\begin{theorem}[Bonilla, Grosse-Erdmann]\label{caracterizacion hiperciclicidad frecuente} Let $X$ be a Fr\'echet sequence space with unconditional basis $(e_n)_{n\geq 0}$. Then $B$ is frequently hypercyclic if and only if there are sets with positive lower density $A_p$ and a sequence of positive nubers $\varepsilon_p\to 0 $ such that
\begin{enumerate}
    \item For every $p\in\mathbb N$, $\sum_{n\in A_p} e_{n+p}\in X$ and
    \item for every $p,q$, $m\in A_q$ and $0\leq j\leq p$
    $$\|\sum_{n\in A_p,n>m} e_{n-m+j}\|_q<\min\{\varepsilon_p,\varepsilon_q\}.$$
    
\end{enumerate}
\end{theorem}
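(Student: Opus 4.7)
The plan is to prove both directions of the characterization by recognising it as a variant of the Frequent Hypercyclicity Criterion tailored to Fr\'echet sequence spaces with unconditional basis, using the Bayart--Grivaux disjointification lemma for sets of positive lower density.

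For sufficiency, I would fix a dense sequence $(y_p)_{p\geq 0}$ in $X$ with $y_p \in \mathrm{span}\{e_0,\dots,e_p\}$ and coefficients uniformly bounded. Via the Bayart--Grivaux lemma I would extract pairwise disjoint subsets $B_p\subseteq A_p$ of positive lower density such that $|n-n'|>\max\{p,q\}$ for all $n\in B_p$, $n'\in B_q$, $(p,n)\neq (q,n')$. With $F$ the formal forward shift $Fe_k=e_{k+1}$, set
$$x\;=\;\sum_{p\geq 0}\sum_{n\in B_p}F^n y_p.$$
Condition (1), together with the unconditional basis inequality from the preliminaries and the uniform bound on coefficients of $y_p$, gives convergence of this series in $X$. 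For $m\in B_q$, I would decompose $B^m x$ into (a) the diagonal term, which reproduces $y_q$, (b) past contributions from indices $n<m$, killed by the separation because $y_p$ is supported in $\{0,\dots,p\}$ and $m-n>p$, and (c) future contributions from $n>m$, whose $\|\cdot\|_q$-norm is dominated, via the triangle inequality and the coefficient bound, by a finite combination of the quantities $\|\sum_{n\in B_p,n>m}e_{n-m+j}\|_q$. Condition (2), together with a prudent choice of $(y_p)$ matched to the decay rate of $\varepsilon_p$, ensures this error tends to $0$ as $q\to\infty$; the symmetric $\min\{\varepsilon_p,\varepsilon_q\}$ is essential to handle both regimes $p\leq q$ and $p>q$.

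For necessity, start from a frequently hypercyclic vector $x$ and a fast-decreasing positive sequence $(\varepsilon_p)$. Define
$$\widetilde A_p\;=\;\Bigl\{n\in\mathbb N_0:\bigl\|B^n x-\textstyle\sum_{i=0}^p e_i\bigr\|_p<\varepsilon_p\Bigr\},$$
which have positive lower density by frequent hypercyclicity. Condition (1) is immediate from unconditionality: for $n\in\widetilde A_p$ the coefficient $x_{n+p}$ is close to $1$, so $\sum_{n\in\widetilde A_p}e_{n+p}$ equals a bounded multiplier times a sub-sum of $x$, hence lies in $X$. For (2) the delicate point is that for $n\in\widetilde A_p$ and $m\in\widetilde A_q$ with $m<n\leq m+q$, the index $n-m+j$ may land in $\{0,\dots,q\}$, where smallness of $B^m x-\sum_{i=0}^q e_i$ does not translate into smallness of a subsum of basis vectors. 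I would therefore thin each $\widetilde A_p$ to a subset $A_p$ of positive lower density with the uniform separation $|n-m|>\max\{p,q\}$ for all $n\in A_p$, $m\in A_q$, $n\neq m$ (a standard greedy/pigeonhole argument on consecutive blocks). With this gap, $n-m+j>q$ for every $n\in A_p$, $n>m\in A_q$, $0\leq j\leq p$, so by unconditionality
$$\Bigl\|\sum_{n\in A_p,n>m}e_{n-m+j}\Bigr\|_q\;\leq\;C_q'\Bigl\|\sum_{n\in A_p,n>m}x_{n+j}e_{n-m+j}\Bigr\|_q\;\leq\;C_q''\,\varepsilon_q,$$
which gives the required bound (the $\varepsilon_p$ side follows by symmetry).

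The main obstacle is the careful bookkeeping in sufficiency: balancing the unconditionality constants $C_q$, the finite-support dimensions of the targets $y_p$, and the decay $\varepsilon_p\to 0$ so that all cross-errors between pairs $(p,q)$ stay below the initial neighborhood size. The disjointification/thinning step, which appears in both directions, is the common technical engine making the argument function uniformly in $p$ and $q$.
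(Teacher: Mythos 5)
First, a point of order: the paper does not prove this statement. Theorem \ref{caracterizacion hiperciclicidad frecuente} is quoted from Bonilla and Grosse-Erdmann \cite{BonGro18} and used as a black box (only in Proposition \ref{prop: condicion con el min}), so there is no in-paper proof to compare yours against. The closest internal analogues are Theorems \ref{0-1 law frequent recurrence} and \ref{Characterization frequent recurrence}, whose proofs do share your architecture: paste forward-shifted copies of finitely supported targets along separated sets of positive lower density, with the separation supplied by Lemma \ref{separation lemma} (not by the Bayart--Grivaux partition lemma, which partitions $\mathbb N$ rather than thinning prescribed sets; for the frequently hypercyclic case the paper notes in Section \ref{seccion separation lemma} that a dynamical separation is also available).

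Judged on its own terms, your sketch has two genuine gaps. In the sufficiency direction you posit a dense sequence $(y_p)$ with $y_p\in\mathrm{span}\{e_0,\dots,e_p\}$ and \emph{uniformly} bounded coefficients; no such sequence exists, since approximating $Me_0$ forces a coefficient of size about $M$. The bounds $M_p=\|y_p\|_\infty$ must grow, your error term for $m\in B_q$ becomes of the order $\sum_{p}(p+1)M_p\min\{\varepsilon_p,\varepsilon_q\}$, and making this converge and tend to $0$ as $q\to\infty$ requires an explicit re-indexing of the family $(A_p)$ against the growth of $(M_p)$, together with a re-verification that conditions (1) and (2) survive the relabelling; saying ``a prudent choice of $(y_p)$'' both contradicts the uniform-boundedness assumption and leaves the main estimate unproved. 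In the necessity direction, ``the $\varepsilon_p$ side follows by symmetry'' is not an argument: $p$ and $q$ enter condition (2) asymmetrically ($q$ indexes both the seminorm and the set containing $m$, while $p$ only indexes the summation set), and your estimate via $B^mx-\sum_{i=0}^q e_i$ yields only a bound of the form $C_q\varepsilon_q$. Obtaining a bound that is also small for large $p$, uniformly in $q$ and in $m\in A_q$ --- which is what $\min\{\varepsilon_p,\varepsilon_q\}$ demands --- is the delicate half of the proof and is missing. Two smaller but real omissions: smallness of $\|B^nx-\sum_{i=0}^p e_i\|_p$ controls neither the individual coordinates $x_{n+j}$ nor the $\|\cdot\|_q$-norms of unconditional pieces; both require working with the auxiliary indices $k_p$ and $m_p$ as in \eqref{eq: beta_p (zero one freq) UNIL} and \eqref{incondicionalidad}.
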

We will always assume that the backward shift $B:X\to X$ is well defined (and hence continuous). Although we will mostly work with unweighted backward shifts all the results can be translated to the  weighted setting via an easy conjugation argument, if $B_w:X\to X$ is a weighted backward shift then we consider $v_n=\prod_{l=1}^n w_l^{-1}$ and $X(v)=\{(x_n)_n:(x_nv_n)_n\in X\}$. It follows that $B:X(v)\to X(v)$ is a well defined backward shift. Moreover, $B:X(v)\to X(v)$ and $B_w:X\to X$ are conjugate via the isomorphism $\phi: X(v)\to X$, $(x_n)_n\to (x_nv_n)_n $. See \cite{GroPer11,bonilla2025zeroEdinburgh} for more details on the matter.

\subsection{A separation Lemma for sets with positive lower density}\label{seccion separation lemma}
We now present a Separation Lemma \ref{separation lemma} for sets with positive lower density, which is crucial for our proofs. The idea of considering sets with separation properties is not new in linear dynamics. For example, a key step to prove that the operators satisfying the frequent hypercyclicity criterion are indeed frequently hypercyclic is a separation lemma of the natural numbers \cite[Lemma 2.2]{BayGri06}. In \cite[Proposition 1]{Bes16} the authors extended this lemma to general families of the natural numbers by proving that if there exist an $\F$-hypercyclic operator, then the family $\F$ must satisfy the separation lemma \cite[Lemma 2.2]{BayGri06}. We finally mention that the proof of the characterization of frequently hypercyclic backward shift operators \cite{BonGro18} uses several times that some sets $A_p$ with positive lower density are separated, where the $A_p$ are sets of hitting times of a frequently hypercyclic vector. 
These ideas have in common that  the separation property obtained is a consequence of a dynamical property: if $x\in X$ is a frequently hypercyclic vector for an operator $T$, and $(y_p)_p\subset X$ is a sequence of pairwise different vectors, then there are $\varepsilon_p>0$ such that the sets $A_p:=\{n\in\mathbb N:\|T^n(x)-y_p\|<\varepsilon_p\}$ satisfy some separation property. However, it is not possible to follow the same strategy for frequent recurrence because in this case the orbit is only known to  approximate a single vector. We have to change the strategy and to look at this property as a consequence of a number theoretic result rather than a dynamical property. To achieve this we will use a result proved recently by Martin, Menet and Puig in \cite{MarMenPui22} (see also \cite{BayGriMatMen24}, where the same lemma is applied).
\begin{lemma}[Separation Lemma]\label{separation lemma}
Let $(A_p)_p$ be a sequence of sets with positive lower density and $(k_p)$ a sequence of natural numbers. Then there exist pairwise disjoint sets of positive lower density $(B_p)_p$, such that for every $p$, $B_p\sub A_p$ and such that for every $n\in B_p$, $m\in B_q$, $n\ne m$, we have that $|n-m|>k_p+k_q$.
\end{lemma}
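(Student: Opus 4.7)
The plan is to reduce the statement to the combinatorial/number-theoretic lemma of Martin, Menet and Puig \cite{MarMenPui22}, handling the joint separation and disjointness conditions by a single simultaneous extraction rather than by a naive iteration. As a preparatory step, I would re-index so that $(k_p)$ is non-decreasing, then thin each $A_p$ individually by a greedy procedure: pick elements of $A_p$ one by one, skipping everything within $2k_p$ of the last chosen point. Each chosen element \emph{covers} at most $2k_p+1$ elements of $A_p$, producing $\widetilde A_p\sub A_p$ with $\underline{\mathrm{dens}}(\widetilde A_p)\geq \underline{\mathrm{dens}}(A_p)/(2k_p+1)>0$ and whose distinct elements are at distance greater than $2k_p$. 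This already takes care of the case $p=q$ in the conclusion of the lemma.

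The heart of the proof is the second step: extracting subfamilies $B_p\sub \widetilde A_p$ that are pairwise disjoint and satisfy the cross-separation $|n-m|>k_p+k_q$ for $n\in B_p$, $m\in B_q$ with $p\ne q$. For this I would invoke the Martin--Menet--Puig lemma, which produces, from any family of positive-lower-density subsets of $\mathbb N_0$, a pairwise disjoint simultaneous thinning of positive lower density satisfying prescribed gap conditions between different index classes, with the right density accounting.

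The main obstacle is exactly what the Martin--Menet--Puig lemma resolves. A naive inductive construction fails because, at stage $p$, the thickened forbidden region $\bigcup_{q<p}(B_q+[-k_p-k_q,k_p+k_q])$ has no a priori control on its \emph{upper} density and could in principle swallow all of $\widetilde A_p$. Their lemma bypasses this by performing a single simultaneous thinning with the correct density bookkeeping compatible with the shift structure; this is exactly why, as the authors of the present paper emphasize, the separation property must be viewed as a number-theoretic rather than a dynamical fact.
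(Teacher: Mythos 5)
Your proposal is sound, and in its essential step it coincides with the paper's treatment: the paper offers no independent proof of this lemma but simply quotes it as the number-theoretic result of Martin--Menet--Puig \cite{MarMenPui22}, whose statement already delivers the separation $|n-m|>k_p+k_q$ for all pairs, including $p=q$. Your preliminary greedy thinning (and the re-indexing of $(k_p)$) is therefore correct but redundant, and the rest of your argument is the same citation the authors make.
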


\section{Frequently recurrent backward shifts: the unilateral case}\label{section unilateral}
In this section we prove the unilateral case of the main Theorem \ref{main theorem} which is an immediate consequence of Theorem \ref{0-1 law frequent recurrence} (the zero-one law), Theorem \ref{Characterization frequent recurrence} (First characterization of frequent recurrence) and Theorem \ref{thm: caract S_1(A)} (Second characterization of frequent  recurrence).
As an application we prove in Subsection \ref{subsection segunda aplicacion} that the set of frequently recurrent vectors of a frequently recurrent backward shift is dense lineable.
An important ingredient of the construction of frequently recurrent vectors is the recurrence arithmetic thickening, which is addressed in Subsection \ref{section RAT}.

\subsection{A zero-one law in ${c_0}$}\label{subsection primer caracterizacion}
We begin this section by proving a zero-one law for frequently recurrent backward shift operators: the existence of one nonzero frequently recurrent vector implies that there is a dense set of such vectors. Then, by studying the construction of this dense set of vectors  we can describe the necessary conditions to obtain frequent recurrence, and we show hence a characterization of frequently recurrent shifts in Theorem \ref{Characterization frequent recurrence}.

We begin with the case of $c_0$, which is  technically less demanding and which illustrates the main underlying ideas for treating the general case. Moreover, the space $c_0$ has long served as a rich source of counter-examples in linear dynamics, for instance in \cite{BayGri07} the authors showed that frequent hypercyclicity  does not imply chaos, topological mixing or the existence of invariant Gaussian measures, by providing suitable examples of backward shifts on $c_0$. 

We will consider the backward shift operator on the weighted $c_0(v)$ space. Here $v=(v_n)_{n\ge 0}$ is a sequence of positive weights and for $x\in c_0(v)$ the norm is given by $\|x\|_{c_0(v)}=\sup_n|v_nx_n|$. We will assume, as we do in the whole manuscript, that $B$ is bounded, which in this case is equivalent to the condition of $(v_n/v_{n+1})_n$ being  bounded.

Our goal is to construct plenty of recurrent vectors starting from a single one. The construction of (frequently) recurrent vectors is in some sense fractal: We start by an initial vector $x_0$, and then we will  paste infinitely many copies of $x_0$ to obtain a vector $x_1\sim x_0$ and  such that the orbit of $x_1$ approximates $x_0$ along some set $A_1$ of positive lower density. In the $k$-step we will paste infinitely many copies of the vector $x_{k-1}$ and the orbit of $x_k$ will approximate each $x_{k-1}$ along  $A_k$. The vector $x=\lim_{k\to \infty} x_k$ will be our candidate to be a frequently recurrent vector. 

\begin{theorem}[Zero-one law for frequently recurrent vectors for $c_0$]\label{0-1 law frequent recurrence c0}
Let $X=c_0(v)$. If $B$ supports a nontrivial frequently recurrent vector then $B$ is frequently recurrent.
\end{theorem}
\begin{proof}
It suffices to prove that for every $R\in\mathbb N_0$, and every $\varepsilon>0$ there is a frequently recurrent vector $y\in X$ such that $\|y-e_R\|_{c_0(v)}<\varepsilon$. Indeed, if $U$ is a nonempty open set, let $z=\sum_{j=0}^R z_je_j\in U$. 
If $y\in X$ is a frequently recurrent vector sufficiently close to $e_R$, the continuity of $B$, of the sum and of the product by scalars imply that the vector $\sum_{j=0}^R z_j B^j(y)$ is a frequently recurrent vector in $U$.

So,  let $x\neq 0$ be a frequently recurrent vector,  $R\in\mathbb N_0$ and $\varepsilon>0$. Without loss of generality, we may suppose that $x_R\neq 0$ (otherwise consider $B^n(x)$ which is frequently recurrent). Let $\delta\in(0,1)$ such that $|x_R|>\delta.$

Since $x$ is a recurrent vector,  there is for every $p\in\mathbb N_0$, a  $q_p>p+R$ such that $|x_{q_p}|>\delta$. By the continuity of the coordinate functionals, there are a decreasing sequence of positive numbers $(\varepsilon_p)_p$ and a sequence of natural numbers $(k_p)_p$ such that for each $p\in\mathbb N_0$, $\varepsilon_{p}<\frac{\varepsilon}{2^{p+1}}$,  $k_p>p+1$ and
\begin{align}\label{eq: beta_p (zero one freq) UNIL c_0}
 \begin{split}
  \|w\|_{c_0(v)}<\varepsilon_{p}\Longrightarrow  |w_{j}|<\frac{\delta}{4} ,  \text{ for every } 0\leq j\leq q_p, &\quad \text{and}\\
 \|x-\sum_{0\leq j\le k_p} x_j e_j\|_{{c_0(v)}}<\frac{\varepsilon}{2^{p+1}}\frac{\delta}{8}.&
 \end{split}
 \end{align}

We will now present sets $A_p$  that will allow us to construct a frequently recurrent vector near $e_R$. For each $p\in\mathbb N_0$, $A_p$ will be a set of positive lower  density such that  
\begin{equation}\label{eq: A_p zero one freq c_0}
A_p\sub N_B(x,U_p), \text{ where } U_p=\{y\in X:\|y-x\|_{c_0(v)}<\varepsilon_p\frac{\delta}{8}\}.
\end{equation}
Applying the Separation Lemma \ref{separation lemma}
we may assume that the $A_p$'s are pairwise disjoint and that for every $n\in A_p,m\in A_q$ with $n\neq m$ we have that
\begin{equation}\label{eq separation zero one freq c_0}
|n-m|>k_p+k_q+2R>p+1+q+1+2R.    
\end{equation}

Notice that for every $p\in\zN_0$ and every $0\leq j\leq p$ we have that   $\sum_{n\in A_p} e_{n+R+j}$ converges. Indeed, by \eqref{eq: beta_p (zero one freq) UNIL c_0} and \eqref{eq: A_p zero one freq c_0} we have that $|x_{n+q_p}-x_{q_p}|<\frac{\delta}{4}$ and hence $|x_{n+q_p}|>\frac{\delta}{2}$ whenever $n\in A_p$. By uncondionality this implies that $\sum_{n\in A_p} e_{n+q_p}$ converges and by applying the backward shift $q_p-(R+j)$ times it follows that $\sum_{n\in A_p} e_{n+R+j}$ converges.  By the finite invariance property of  the sets of positive lower density, we may assume that 
\begin{equation}\label{eq A_p empieza en p c_0}
    A_p\sub [p+R+1,\infty)
\end{equation} and that,  for every $0\le j\le p,$
\begin{equation}\label{eq: sumas de e_j norma chica (zero one freq) c_0}
 \|\sum_{n\in A_p} e_{n+R+j}\|_{c_0(v)}<\frac{\varepsilon}{2^{p+1}}.   
\end{equation}
We will now proceed to construct a frequently recurrent vector near $e_R$ by induction. The rough idea is to construct first a vector $y^1$ near $e_R$ whose orbit is frequently near $y^0:=e_R$, and by induction a sequence of vectors $(y^k)_k$, contained in a small neighbourhood of $e_R$, and such that the orbit of $y^k$ is frequently near $y^{k-1}$. Those vectors will be constructed so that they   form a convergent sequence to a vector $y$, which will be a frequently recurrent vector near $e_R.$ Each vector $y^k$ will consist on carefully pasting together shifted copies of a truncation of the vector $y^{k-1}$ (and hence of copies of $e_R$).
 
Set $N_0=M_0=0$ and let $y^1=e_R+\sum_{n\in A_0}
e_{n+R}=e_R+\sum_{n\in A_0}
e_{n+N_0+R}$.  The vector is well defined, because of \eqref{eq: sumas de e_j norma chica (zero one freq) c_0}. Notice also that
$$\|y^1-e_R\|_{c_0(v)}=\|\sum_{n\in A_0}
e_{n+R}\|_{c_0(v)}<\frac{\varepsilon}{2}.$$

For the second step, let $N_1\in\zN$ such that $N_1+R$ is the second nonzero coordinate of $y^1$, which in this step coincides with $\min A_0$. Let $M_1>N_1$  such that $ \frac{\delta}{8}\|B\|^{N_{1}}\varepsilon_{M_{1}}<\frac{\varepsilon_0}{2^{2}}$.

We will add to $y^1$ infinitely many copies of a truncation $P_{N_1+R}(y^1)$ of $y^1$, where $P_{N_1+R}$ denotes the projection into the first $N_1+R$ coordinates:

\begin{align*}
  y^2&=y^1+\sum_{n\in A_{M_1}}\sum_{j=0}^{N_1+R} y^1_j e_{n+j} =y^1+\sum_{n\in A_{M_1}}e_{n+N_0+R}+e_{n+N_1+R}  =e_R+\sum_{p=0}^{1}\sum_{n\in A_{M_p}}\sum_{j=0}^p e_{n+N_j+R}.
  \end{align*}

Observe that, since $|n-m|>k_p+k_q+2R$ whenever $n\in A_p,m\in A_q$ and $k_p>p$, the sums involved have disjoint support. Also, since we imposed in \eqref{eq A_p empieza en p c_0} that $n>p+R$ for every $n\in A_p$ and $M_1>N_1$ we have that the first two nonzero coordinates of $y^2$ are $R$ and $N_1+R$.  It follows by \eqref{eq: sumas de e_j norma chica (zero one freq) c_0} that the sums are well defined and that 
$$\|y^2-y^1\|_{c_0(v)}=\|\sum_{n\in A_{M_1}}\sum_{j=0}^1 e_{n+N_j+R}\|_{c_0(v)}\leq  \frac{\varepsilon}{2^{M_1+1}}<\frac{\varepsilon}{2^2}.$$

Let us briefly sketch the third step. Let $N_2\in\zN$ such that $N_2+R$ is the third nonzero coordinate of $y^2$ (In this case $N_2=\min\{n\in A_{M_0}\cup A_{M_1}\,:\, n>N_1\}.$ Let $M_2>N_2$ such that $ \frac{\delta}{8}\|B\|^{N_{2}}\varepsilon_{M_{2}}<\frac{\varepsilon_0}{2^{3}}$.


Let us define $y^3$, with three copies of $e_R$ starting at each element of $n\in A_{M_2}$:
\begin{align*}
y^3&=y^2+\sum_{n\in A_{M_2}}\sum_{j=0}^{N_2+R} y^2_j e_{n+j}    =y^2+\sum_{n\in A_{M_2}} \sum_{j=0}^2  e_{n+N_j+R} =e_R+\sum_{p=0}^{2}\sum_{n\in A_{M_p}}\sum_{j=0}^p e_{n+N_j+R}.
\end{align*}
Then, proceeding as before, $y^3$ will be well defined and will satisfy, by \eqref{eq: sumas de e_j norma chica (zero one freq) c_0}, that $\|y^3-y^2\|_{c_0(v)}<\frac{\varepsilon}{2^3}$ and that its first three nonzero coordinates are $R,N_1+R$ and $N_2+R$.

We point out that there is a subtle difference from the fourth step onwards. Again $N_3\in\zN$ is defined so that $N_3+R$ is the fourth nonzero coordinate of $y^3$, but in this case  $N_3$ is not necessarily $\min\{n\in A_{M_0}\cup A_{M_1}\cup A_{M_2}\,:\, n>N_2\}.$ This is because it may happen that $N_1+N_2+R$ is the fourth nonzero coordinate of $y^3$, and $N_1+N_2\notin A_{M_0}\cup A_{M_1}\cup A_{M_2}$. In any case, since the nonzero coordinates of $y^3$ are of the form $n+N_j+R$, with $n\in A_{M_p}$, $1\leq p\leq 2$ and $0\leq j\leq p$, we have that 
$$N_3=\min(N_2,\infty)\bigcap\left(\bigcup_{p=0}^2\bigcup _{j=0}^p A_{M_p}+N_j\right).$$
The definition of $y^4$ and $M_3$ is analogous.

Let us now proceed to the inductive step. Suppose that we have defined $N_1,\dots, N_{k-1}$, $M_1,\dots,M_{k-1}$ and $y^1,\dots,y^k$ in the same way and that for every  $1\leq j\le k,$
$$
\|y^{j}-y^{j-1}\|_{c_0(v)}<\frac{\varepsilon}{2^{j}}.
$$
Define $N_k$ such that $N_k+R$ is the $(k+1)$-th nonzero coordinate of $y^k$  and let $M_k>N_k$ such that 
\begin{equation}\label{eq: definicion M_k c_0}
    \frac{\delta}{8}\|B\|^{N_{k}} \varepsilon_{M_{k}}<\frac{\varepsilon_0}{2^{k+1}}.
\end{equation}



Let
\begin{align}\label{eq: def y^{k+1}}
y^{k+1}=y^k+\sum_{n\in A_{M_{k}}}\sum_{j=0}^{N_{k}+R} y^k_j e_{n+j} =y^k+\sum_{n\in A_{M_{k}}}\sum_{j=0}^{k}  e_{n+N_j+R}=e_R+\sum_{p=0}^{k}\sum_{n\in A_{M_p}}\sum_{j=0}^p e_{n+N_j+R}.    
\end{align}
Note that by \eqref{eq: sumas de e_j norma chica (zero one freq) c_0},
\begin{align*}
\|y^{k+1}-y^k\|_{c_0(v)} &=\|\sum_{n\in A_{M_{k}}}\sum_{j=0}^{k}e_{n+N_j+R}\|_{c_0(v)}
\leq \frac{\varepsilon}{2^{M_{k}+1}}<\frac{\varepsilon}{2^{k+1}}.    
\end{align*}
This shows that $(y^k)_k$ is a Cauchy sequence so that the limit vector exists and that

\begin{align*}
y:=\lim_{k\to \infty} y^k &=e_R+\sum_{p=0}^{\infty}\sum_{n\in A_{M_p}}\sum_{j=0}^pe_{n+N_j+R}=e_R+\sum_{p=0}^\infty\sum_{n\in A_{M_p}}\sum_{j=0}^{N_p+R}y_je_{n+j}
\end{align*}
is well defined and that $\|y-e_R\|_{c_0(v)}<\varepsilon.$
Note that by the construction of $y^k$ we have that
\begin{align*}
y=\sum_{j=0}^\infty e_{N_j+R}.
\end{align*}

It remains to prove that $y$ is a frequently recurrent vector. From now and until the end of the proof, we fix $q\in\zN$ and $m\in A_{M_{q}}$. Recall that by \eqref{eq separation zero one freq c_0}, since $k_p>p$ for every $p,$ we have that for every $p\in\mathbb N$ and $n\in A_{M_p}$, $|n-m|>M_p+M_{q}+2R>N_p+N_{q}+2R$. Thus, if $n<m$ and $n\in A_{M_{p}}$, then $n-m+N_j+N_q+R<0$ for every $0\le j\le p$. Hence,
$$B^m\left(\sum_{n\in A_{M_p},n<m}\sum_{j=0}^pe_{n+N_j+R}\right)=\sum_{n\in A_{M_p},n<m}\sum_{j=0}^pe_{n-m+N_j+R}=0.$$
Noting that $m$ belongs only to $A_{M_{q}}$, because the $A_{M_p}$'s are pairwise disjoint, we get that
\begin{align*}
B^m(y)-\sum_{j=0}^{N_{q}+R}y_je_j &=B^m(y)-\sum_{j=0}^{q}e_{N_j+R}\\
&=\sum_{p\neq q}\sum_{n\in A_{M_p},n>m}\sum_{j=0}^{p}e_{n-m+N_j+R}+\sum_{n\in A_{M_{q}},n\geq m}\sum_{j=0}^{q}e_{n-m+N_j+R}-\sum_{j=0}^qe_{N_j+R}\\
&=\sum_{p=0}^\infty \sum_{n\in A_{M_p},n>m}\sum_{j=0}^pe_{n-m+N_j+R}\\
\end{align*}
Thus,
\begin{equation}\label{eq norma frec zero one c_0}
 \|B^m(y)-\sum_{j=0}^{N_{q}+R}y_je_j \|_{c_0(v)}= \left\|\sum_{p=0}^\infty \sum_{n\in A_{M_p},n>m}\sum_{j=0}^pe_{n-m+N_j+R}\right\|_{c_0(v)}.
\end{equation}

We claim that for every $N_k$ we have that $\|B^{N_k}x-x\|_{c_0(v)}<\varepsilon_0-\frac{\varepsilon_0}{2^k}\leq\varepsilon_0$.
Indeed, for $N_0=0$ it is plain and for $N_1=\min A_0$ it follows because $A_0\subseteq \{n\in \zN: \|B^n(x)-x\|_{c_0(v)}< \frac{\varepsilon_0 \delta}{8}\}$ and $\delta<1$.

Let $k>1$. By the construction of $N_k$, we have that $N_k+R$ is the  $(k+1)$-th non zero coordinate of $$y^{k}=e_R+\sum_{p=0}^{k-1}\sum_{n\in A_{M_p}} \sum_{j=0}^p e_{n+N_j+R}.$$ 
By a quick inspection we have that $N_k$ must be a number of the form $n+N_j$, where $n\in A_{M_p}$ for some $0\leq p\leq {k-1}$ and some $0\leq j\leq p$. Hence by \eqref{eq: definicion M_k c_0} and the inductive hypothesis,
\begin{align}
\label{eq: A contenido en N(x,epsilon)} \|B^{N_k}x-x\|_{c_0(v)} & =\|B^{n+N_j}x-x\|_{c_0(v)}\leq \|B^n B^{N_j}x-B^{N_j}x\|_{c_0(v)}+\|B^{N_j}x-x\|_{c_0(v)}    \\
\nonumber &\le \|B\|^{N_j}\,\|B^{n}x-x\|_{c_0(v)}+ \varepsilon_0(1-\frac1{2^j}) < \|B\|^{N_j-N_p}\,\frac{\varepsilon_0}{2^{p+1}}+  \varepsilon_0(1-\frac1{2^{j}}) < \varepsilon_0(1-\frac1{2^k}).
\end{align}

This implies that for every $j\in \zN_0$, $\|B^{N_j}x-x\|_{{c_0(v)}}<\varepsilon_0$ and since $|x_R|>\delta$, we have by \eqref{eq: beta_p (zero one freq) UNIL c_0} that $|x_{N_j+R}|>\frac{\delta}{2}$. On the other hand, we have that for every $n\in A_{M_p},$ $\|B^{n}x-x\|_{{c_0(v)}}<\varepsilon_{M_p}$ and hence by \eqref{eq: beta_p (zero one freq) UNIL c_0} and the fact that $q_{M_p}>M_p+R>N_j+R,$ we obtain for $0\leq j\leq p$ and $n\in A_{M_p}$ that
$|x_{n+N_j+R}-x_{N_j+R}|<\frac{\delta}{4}$. This implies that for every $n\in A_{M_p}$ and $0\leq j\leq p$, $|x_{n+N_j+R}|>\frac{\delta}{4}.$
 
 Since $m\in A_{M_q}$ and by \eqref{eq separation zero one freq c_0}, $n-m>M_p+k_{M_{q}}+R>k_{N_{q}}$ whenever $n\in A_{M_p}$ and $n>m$,  we have  $[\sum_{l=0}^{k_{N_{q}}}x_le_l]_{n-m+N_j+R}=0$ for each $0\le j\le p$. Hence it follows that  
  \begin{align*}
 \begin{split}
 \Bigg\|\sum_{p=0}^\infty \sum_{n\in A_{M_p},n>m}\sum_{j=0}^p & 
  e_{n-m+N_j+R} \Bigg\|_{c_0(v)}
 =\Bigg\|\sum_{p=0}^{\infty}\sum_{n\in A_{M_p},n>m}\sum_{j=0}^p\frac{1}{x_{n+N_j+R}}\cdot{x_{n+N_j+R}}e_{n-m+N_j+R}\Bigg\|_{c_0(v)}\\
 &=\Bigg\|\sum_{p=0}^{\infty}\sum_{n\in A_{M_p},n>m}\sum_{j=0}^p\frac{1}{x_{n+N_j+R}}\cdot[B^m(x)-\sum_{l=0}^{k_{N_{q}}}x_le_l]_{n-m+N_j+R}e_{n-m+N_j+R}\Bigg\|_{c_0(v)}\\
 &\leq \frac{4}{\delta}\left(\|B^m(x)-x\|_{{c_0(v)}}+\|x-\sum_{l=0}^{k_{N_{q}}}x_le_l\|_{{c_0(v)}}\right).
  \end{split}
 \end{align*}

Therefore, using  \eqref{eq norma frec zero one c_0}  together with \eqref{eq: beta_p (zero one freq) UNIL c_0} and the definition of $A_{M_p}$ in \eqref{eq: A_p zero one freq c_0}, we conclude that for any $m\in A_{M_{q}},$
\begin{align}\label{eq: B^my final c_0}
\|B^m(y)-\sum_{j=0}^{N_{q}+R}y_je_j \|_{c_0(v)}\le \frac{4}{\delta}\left(\|B^m(x)-x\|_{{c_0(v)}}+\|x-\sum_{l=0}^{k_{N_{q}}}x_le_l\|_{{c_0(v)}}\right)< \frac{\varepsilon}{2^{N_{q}+1}}.
    \end{align}

Finally let $V$ be an open set around $y$. Then since  ${N_q}\to\infty$ as $q\to\infty,$    $\sum_{j=0}^{N_q+R} y_je_j\to y$ and hence,  for big enough  $q\in\mathbb N$, it follows by \eqref{eq: B^my final c_0} that $A_{M_{q}}\sub N_B(y,V).$
This shows that $y$ is a frequently recurrent vector.
\end{proof}

\subsection{The recurrent arithmetic thickening\label{section RAT}}
A careful look at the proof of Theorem \ref{0-1 law frequent recurrence c0} shows that the sequences $(N_k)_k$ and $(A_{M_p})_p$ constructed there have an arithmetical relation, specifically, 
$$
N_{k}=\min\{ n>N_{k-1}\,:\, n\in\bigcup_{p=0}^{k-1}\bigcup_{j=0}^p A_{M_p}+N_j\}.
$$
This can be seen from equation \eqref{eq: def y^{k+1}} and recalling that $N_{k}$ is defined as the $(k+1)$-th nonzero coordinate of $y^k$. 
Moreover, if $A:=\{N_k:k\in\zN\}=\bigcup_{p=0}^{\infty}\bigcup_{j=0}^p A_{M_p}+N_j$, $A$ is contained in the set of return times of a small neighborhood of the recurrent vector, see \eqref{eq: A contenido en N(x,epsilon)}.

This at first sight strange condition should not be entirely surprising, since it is well-known that the sets of return times of recurrent vectors always contain some arithmetical structure. Indeed, it is shown in \cite[Theorem 2.17]{Fur81} that the sets of return times of recurrent points always contain $IP$-sets. A key point to achieve our goals is not only to obtain arithmetic structure, but to keep the  positive density of the set. 

Inspired by these comments we will define the recurrent arithmetic thickening of a sequence of subsets of $\zN$. 
\begin{definition}
    Given a sequence of subsets $A_p\sub \zN$ we define the recurrent arithmetic thickening of $(A_p)_p$ as the set
    \begin{equation}\label{RAT(A_p)}
    RAT((A_p)_p):=\{0\}\cup\bigcup_{p=0}^\infty\bigcup_{j=0}^p A_{p}+N_j ,\, \text{ where, }N_k=\min\{ n>N_{k-1}\,:\, n\in\bigcup_{p=0}^{k-1}\bigcup_{j=0}^p A_{p}+N_j\}
       \end{equation}
     for $k\ge1$ and $N_0=0$.
     \end{definition}
It is clear that, for any sequence $(A_p)_p$, it holds $(N_j)_j\sub RAT((A_p)_p)$. We will say that the sequence $(A_p)_p$ is \emph{compatible} if $RAT((A_p)_p)=(N_j)_j$.



\begin{remark}\label{rem:A_0 contenido en RAT}
    For any sequence of subsets $(A_p)_p$, let $N_k$ be the numbers appearing in the definition of $RAT((A_p)_p)$, i.e. $N_{k+1}=\min\{ n>N_{k}\,:\, n\in\bigcup_{p=0}^{k}\bigcup_{j=0}^p A_{p}+N_j\}$. Then $A_0\sub (N_k)_k$ and moreover, for any $0\le l\le p$,
    $$
    A_p\cap [N_p,\infty)+N_l\sub (N_k)_k.
    $$
    In particular, $(A_p)_p$ is compatible if $A_k\cap [1,N_k)=\emptyset$ for every $k\ge1.$
\end{remark}
\begin{proof}
Let $n\in A_p$, $n\ge N_p$. If $n=N_p$ the conclusion is obvius. If $n>N_p$ then $n+N_l\in A_p+N_l\subseteq  \bigcup_{m=0}^{k}\bigcup_{j=0}^m A_{m}+N_j$ for every $k\ge p$.  Then, since $N_k$ is the increasing sequence of minimums $N_{k+1}=\min\{ n>N_{k}\,:\, n\in\bigcup_{m=0}^{k}\bigcup_{j=0}^m A_{m}+N_j\}$ and $n+N_l> N_p$, $n+N_l$ will eventually coincide with some $N_{k}$, $k>p$.  Hence $n+N_l\in \{N_{p+1},N_{p+2},\dots\}$. 
\end{proof}

We will next show that the recurrent arithmetic thickening appears naturally for any recurrent point of any dynamical system and we will then apply this fact to show the general case of the Zero-one law.
\begin{proposition}\label{prop: condic necesaria vectores recurrentes}
    Let $(X,T)$ be a dynamical system (i.e. $X$ a first-countable topological space and $T:X\to X$  continuous). Let $x\in X$ be a recurrent vector, $(U_p)_p$ be a decreasing base of neighbourhoods of $x$ and $A_p\subset N_T(x,U_p)$ be non-empty sets. Then for each $q\in \zN$ there exists a subsequence $(A_{M_p})_p$ such that 
       $RAT((A_{M_p})_p)\subseteq N_T(x,U_q)$. 
\end{proposition}

\begin{proof}
Let $q\in \zN$. Let $M_0=q$. For $N_1=\min A_q$ we have $T^{N_1}x\in U_q$. Thus there is some $M_1$ such that $T^{N_1}(U_{M_1})\sub U_q$. Observe that this implies that for every $n\in A_{M_1}$ and every $0\leq j\leq 1$ we have that $T^{n+N_j}(x)\in U_q$. Indeed, for $j=0$ it is plain and for $j=1$, $T^{n+N_1}(x)=T^{N_1} T^n(x)\in T^{N_1}(U_{M_1})\subseteq U_q.$ 

We now define $M_{k+1}$ inductively. So we assume that we have defined $M_0,\dots,M_k$ such that for every $0\le j\le p\le k,$
$$
T^{N_j}(U_{M_p})\sub U_q,
$$
where the $N_j$ are defined by the relations \eqref{RAT(A_p)}.
Then since $N_{k+1}=n+N_j$ for some $n\in A_{M_p}$ with $0\le j\le p\le k,$ we have by the inductive hypothesis that $T^{N_{k+1}}x\in U_q$. So by continuity there is $M_{k+1}$ such that $T^{N_{j}}(U_{M_{k+1}})\in U_q$ for every $0\le j\le k+1$. 

It remains to show that $RAT((A_{M_p})_p)\subseteq N_T(x,U_q)$. Let $N\in RAT((A_{M_p})_p)$. Then  $N=n+N_j$ for some $n\in A_{M_k}$ with $0\le j\le k.$ Therefore $T^Nx=T^{n+N_j}x\in T^{N_j}(U_{M_k})\sub U_q $ by the definition of $M_k$.
\end{proof}
\begin{remark}\label{rem:M_k>N_k}
    Note that in the above proof $M_k$ is chosen after defining $N_k$, so we may assume without loss of generality that $M_k>N_k$.
\end{remark}

    We note next that for any sequence $(A_p)_p$ of infinite sets, the recurrent arithmetic thickening $RAT((A_{p})_p)$ is an $IP$-set. In particular we obtain as a consequence of \cite[Theorem 2.17]{Fur81} the following converse of Proposition \ref{prop: condic necesaria vectores recurrentes}.
\begin{proposition}\label{prop: los RAT son IP}
    Let $(A_p)_p$ any sequence of infinite sets, then the recurrent arithmetic thickening $RAT((A_{p})_p)$ contains an $IP$-set.  Moreover, there are a recurrent vector $x$  of some dynamical system $(X,T)$ and a neighborhood $U$ of $x$ such that $N_T(x,U)\sub RAT((A_{p})_p)$.

    Conversely, any $IP$-set contains $RAT((A_{p})_p)$ for some sequence  $(A_p)_p$ of infinite sets.  
\end{proposition}
\begin{proof}
    Take $m_1=N_1=\min A_0$ and $j_1=1$. Take $m_2\in A_{N_{j_1}}\cap (N_{j_1},\infty)$, where $(N_k)_k$ is the sequence in \eqref{RAT(A_p)}.
    By Remark \ref{rem:A_0 contenido en RAT}, we have that $\{m_1,m_{2}, m_1+m_2\}\sub \{N_k:k\in \zN\}$, thus     $m_1+m_2=N_{j_{2}}$ for some $j_{2}> j_1$. Then if $m_3\in A_{N_{j_2}}\cap (N_{j_2},\infty)$ we obtain again by Remark \ref{rem:A_0 contenido en RAT} that $\{m_1,m_{2},m_3, m_1+m_2,m_2+m_3,m_1+m_3,m_1+m_2+m_3\}\sub \{N_k:k\in \zN\}$. Proceeding inductively we obtain the first claim. 

    The second assertion now follows immediately from \cite[Theorem 2.17]{Fur81}.

    For the converse, if $A$ is the $IP$-set generated by the finite sums of an infinite set $A_0$. Consider the constant sequence given by $A_p=A_0$ for every $p$. Then $RAT((A_{p})_p)\sub A.$
\end{proof}

As mentioned above one advantage of the $RAT$ construction (over the $IP$-set construction) is that it allows to preserve properties like density of the original sets. This invites us to consider the notion of $RAT$-set associated to a given family in $\zN$.
\begin{definition}
    Let $\mathcal F$ be any hereditarily upward family of subsets of $\zN$ (i.e. $B_1\in\mathcal F$ and $B_1 \sub B_2$ imply $B_2\in\mathcal F$). We will say that a set $A\sub\zN$ is in $RAT_{\mathcal F}$ if there is a sequence $(A_p)_p$ of sets in $\mathcal F$ such that $ RAT((A_{p})_p)\sub A$. 
\end{definition}
Recall that, given an hereditary upward family $\mathcal F$, a vector $x$ is said to be \textit{$\mathcal F$-recurrent} for $T$ provided that for every open set $U$ around $x$ we have that $N_T(x,U)\in \mathcal F$.

Note that \cite[Theorem 2.17]{Fur81} implies that recurrence is equivalent to $IP$-recurrence.
As an immediate consequence of Proposition \ref{prop: condic necesaria vectores recurrentes} we obtain the following generalization of Furstenberg's Theorem to arbitrary families.
\begin{proposition}\label{prop: RAT_F}
Let $\mathcal F$ be an hereditary upward family and $(X,T)$ a dynamical system. Then $x\in X$ is a $\mathcal F$-recurrent point if and only if it is a $RAT_\mathcal F$-recurrent point.
\end{proposition}

We will not pursue any further on the investigation of $RAT_\mathcal F$-recurrence but we mention that  the notion of $RAT_\mathcal F$-recurrence, for $\mathcal F$ the family of sets with positive lower density, is of key importance for the proof of our main results. In the following construction we will check that
the vectors obtained are $RAT$-frequently recurrent, and hence frequently recurrent.

\subsection {Zero-one law. The general case}\label{section 0-1 general case}
We will prove now the zero-one law for arbitrary Fr\'echet spaces. The same construction of Theorem \ref{0-1 law frequent recurrence c0} works, however some technical details appear when dealing with unconditional basis on non-normable Fr\'echet spaces.
We will present a slightly different proof which relies on the recurrence arithmetic thickening: we first apply the separation Lemma to obtain separated subsets $A_p\subseteq N_B(x,U_p)$. Then we apply Proposition \ref{prop: condic necesaria vectores recurrentes} to obtain a compatible subsequence $A_{M_p}$ so that  $RAT((A_p)_p)=\{N_j\}$. The vector $1_{RAT((A_p)_p)}=\sum_{n\in {RAT((A_p)_p))}} e_{N_j}$ will be our candidate to be frequently recurrent. 


\begin{theorem}[Zero-one law for frequently recurrent vectors]\label{0-1 law frequent recurrence}
Let $X$ be a Fr\'echet sequence space with unconditional basis $(e_n)_{n\geq 0}$. If $B$ supports a nontrivial frequently recurrent vector then $B$ is frequently recurrent.
\end{theorem}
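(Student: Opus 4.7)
The plan is to start with a non-zero frequently recurrent vector $x$ and manufacture, for every finitely supported target $y$ and every tolerance $\delta>0$, a frequently recurrent vector $z$ with $\|z-y\|<\delta$. As a preliminary reduction, the vector $B^kx$ is itself frequently recurrent: given a neighbourhood $U$ of $B^kx$, continuity of $B^k$ yields a neighbourhood $V$ of $x$ with $B^k(V)\subseteq U$, and then $\{n:B^n(B^kx)\in U\}\supseteq\{n:B^nx\in V\}$ has positive lower density. So, replacing $x$ with $B^{\min\operatorname{supp}(x)}x$ and rescaling, I assume $x_0=1$.

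For a decreasing sequence $\varepsilon_p\to 0$ to be chosen, set $A_p=\{n:\|B^nx-x\|_p<\varepsilon_p\}$; each has positive lower density. Continuity of the coordinate functionals together with the unconditional basis inequality \eqref{incondicionalidad} imply that, for $\varepsilon_p$ small, $|x_{n+j}-x_j|$ is small for every $n\in A_p$ and every $j$ up to a cutoff $J_p\to\infty$. Picking $\varepsilon_0<1/2$ forces $|x_n|\geq 1/2$ on $A_0$, so the multiplier $a_n=(1/x_n)\mathbf{1}_{A_0}(n)$ is bounded; applying \eqref{incondicionalidad} to $ax$ yields $\sum_{n\in A_0}e_n\in X$ with controlled seminorms. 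The same trick, applied at coordinates where $x_j\ne 0$, produces convergent series $\sum_{n\in A_p}e_{n+j}\in X$ for $j$ in an increasing range.

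For density, given $y=\sum_{j=0}^N y_je_j$ and a tolerance $\delta>0$ in the seminorm $\|\cdot\|_{p_0}$, I apply the Separation Lemma \ref{separation lemma} to some $A_p$ (with $p$ large and $\varepsilon_p$ small) to extract a subset $A$ of positive lower density whose consecutive elements are separated by more than $N$ and whose minimum is so large that the tail $\sum_{n\in A}\sum_{j=0}^N y_je_{n+j}$ has $\|\cdot\|_{p_0}$-seminorm less than $\delta$. Convergence of this series is ensured by the previous step. Setting
$$z=y+\sum_{n\in A}\sum_{j=0}^N y_je_{n+j}$$
gives $\|z-y\|_{p_0}<\delta$.

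The crux, and the step I expect to be hardest, is verifying that $z$ is frequently recurrent. For $m\in A$, the separation on $A$ implies $B^mz=\sum_{n\in A,\,n\geq m}\sum_{j=0}^N y_je_{n+j-m}$, and $B^mz-z$ is then controlled by the symmetric difference between $(A-m)\cap\mathbb N_0$ and $A\cup\{0\}$, weighted by $y$. The positive lower density of $A$ alone does not suffice here; what is needed is an approximate additivity inherited from the conditions $\|B^nx-x\|_p<\varepsilon_p$ on $A_p$. To exploit it, I would organize the sets $A_p$ hierarchically via a further application of the Separation Lemma, choose $\varepsilon_p$ with $\sum_p\varepsilon_p<\infty$, and group the error terms in $B^mz-z$ by the level $p$ so that each group is dominated by $\varepsilon_p$ in the target seminorm. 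The care required in this step should simultaneously yield the quantitative characterization $(4)$ in the main theorem.
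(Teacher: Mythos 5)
Your setup (reducing to approximating finitely supported vectors, passing to $B^kx$, extracting return-time sets $A_p=\{n:\|B^nx-x\|_p<\varepsilon_p\}$, and using unconditionality plus the lower bound on $|x_n|$ over $A_p$ to get convergence of $\sum_{n\in A_p}e_{n+j}$) matches the paper's. But there is a genuine gap exactly where you flag it, and your proposed fix is not the one that works. The single-level vector $z=y+\sum_{n\in A}\sum_{j=0}^N y_je_{n+j}$ cannot be frequently recurrent in general: for $m\in A$ you would need $(A-m)\cap\mathbb N_0$ to approximately reproduce $A$, i.e.\ $A$ to be nearly closed under differences, and neither positive lower density nor membership in $\{n:\|B^nx-x\|_p<\varepsilon_p\}$ gives this (note $\|B^{n-m}x-x\|$ is not controlled by $\|B^nx-x\|+\|B^mx-x\|$, since $B^{n-m}$ need not map small vectors to small vectors). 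Your remedy --- ``organize the $A_p$ hierarchically, take $\sum\varepsilon_p<\infty$, group errors by level'' --- names the shape of the construction but omits the two ideas that actually close the argument.

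First, the paper's construction is genuinely self-similar: one builds $y^{k+1}=y^k+\sum_{n\in A_{N_k}}\sum_{j=0}^{N_k+R}y^k_je_{n+j}$, so that at each $n\in A_{N_k}$ the limit vector $y$ carries a copy of its \emph{own} first $k+1$ nonzero coordinates (supported on $\{N_0+R,\dots,N_k+R\}$, where the $N_k$ recursively enumerate $\{0\}\cup\bigcup_jA_{N_j}$), and one only proves that $B^my$ is close to the \emph{truncation} $\sum_{j=0}^{N_q+R}y_je_j$ for $m\in A_{N_q}$; since these truncations converge to $y$, frequent recurrence follows. This replaces the unobtainable ``$A-m\approx A$'' by a statement about finitely many levels at a time. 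Second, and crucially, the estimate of the residual $\sum_{p}\sum_{n\in A_{N_p},\,n>m}\sum_{j=0}^pe_{n-m+N_j+R}$ is not obtained by summing the $\varepsilon_p$ over levels; it is obtained in one stroke by writing each surviving coordinate as $x_{n+N_j+R}^{-1}\cdot[B^m(x)-\sum_{l\le k_{N_q}}x_le_l]_{n-m+N_j+R}$, using the lower bound $|x_{n+N_j+R}|>\delta/4$ and unconditionality to dominate the whole residual by $\frac{4C_q}{\delta}(\|B^mx-x\|_{m_q}+\|x-\sum_{l\le k_{N_q}}x_le_l\|_{m_q})$, which is small precisely because $m$ is a return time of $x$. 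Without this transfer mechanism there is no reason the level-$p$ error group ``is dominated by $\varepsilon_p$,'' so as written the proposal does not yield a proof.
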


\begin{proof}
Let $x\neq 0$ be a frequently recurrent vector. It suffices to prove that for each $R\in\mathbb N_0$, and each $\varepsilon>0$ there is a frequently recurrent vector $y\in X$ such that $\|y-e_R\|<\varepsilon$, where $\|\cdot\|$ denotes an $F$-norm on $X$ (see Section \ref{preliminaries}). 

We may suppose that   $|x_R|>\delta$ for some $\delta>0$.   
Since $x$ is a recurrent vector,  there is for every $p\in\mathbb N_0$, a  $q_p>p+R$ such that $|x_{q_p}|>\delta$. By the continuity of the coordinate functionals,  there are a decreasing sequence of positive numbers $(\varepsilon_p)_p$ and a sequence of natural numbers $(k_p)_p$ such that for each $p\in\mathbb N_0$, $\varepsilon_{p}<\frac{\varepsilon}{2^{p+1}}$, $k_p>p+1$ and   
\begin{align}\label{eq: beta_p (zero one freq) UNIL}
 \begin{split}
  \|w\|_{k_p}<\varepsilon_{p}\Longrightarrow  |w_{j}|<\frac{\delta}{4} ,  \text{ for every } 0\leq j\leq q_p, &\quad \text{and}\\
 \|x-\sum_{0\leq j\le k_p} x_j e_j\|_{m_p}<\frac{\varepsilon}{2^{p+1}}\frac{\delta}{8C_p},&
 \end{split}
 \end{align}
 where $C_p$ and $\|\cdot\|_{m_p}$ are the unconditional constant basis and seminorm associated to $\|\cdot\|_p$ (see \eqref{incondicionalidad}).

We will now present sets $A_p\sub \zN$  that will allow us to construct a frequently recurrent vector near $e_R$. For each $p\in\mathbb N_0$, $A_p$ will be a set of positive lower  density such that  
\begin{equation}\label{eq: A_p zero one freq}
A_p\sub N_B(x,U_p),\quad\text{where }\, U_p=\left\{y\,:\, \|y-x\|_{k_p+m_p}<\varepsilon_p\frac{\delta}{8C_p}\right\}.
\end{equation}

Applying the Separation Lemma \ref{separation lemma}
we may assume that the $A_p$'s are pairwise disjoint and that for every $n\in A_p,m\in A_q$ with $n\neq m$ we have that
\begin{equation}\label{eq separation zero one freq}
|n-m|>k_p+k_q+2R>p+1+q+1+2R.    
\end{equation}

Notice that for each  $p\in \zN_0$ and each $0\leq j\leq p$ we have that, $\sum_{n\in A_p} e_{n+R+j}$ converges. Indeed, by \eqref{eq: beta_p (zero one freq) UNIL} and \eqref{eq: A_p zero one freq} we have that $|x_{n+q_p}-x_{q_p}|<\frac{\delta}{4}$ and hence $|x_{n+q_p}|>\frac{\delta}{2},
$ for all $n\in A_p.$ By unconditionality this implies that $\sum_{n\in A_p} e_{n+q_p}$ converges and by applying the backward shift $q_p-(R+j)$ times it follows that $\sum_{n\in A_p} e_{n+j+R}$ converges for every $0\leq j\leq p$. By the finite invariance property of  the sets of positive lower density, 
we may assume that 
\begin{equation}\label{eq A_p empieza en p}
    A_p\sub [p+R+1,\infty)
\end{equation}
and that,  for every $0\le j\le p,$
\begin{equation}\label{eq: sumas de e_j norma chica (zero one freq)}
 \|\sum_{n\in A_p} e_{n+R+j}\|<\frac{\varepsilon}{(p+1)2^{p+1}}.   
\end{equation}

Applying Proposition \ref{prop: condic necesaria vectores recurrentes} and Remark \ref{rem:M_k>N_k} there is a sequence $(M_p)_p$ for which $RAT((A_{M_p})_p)\subseteq N_B(x,U_0)$ and such that $M_p>N_p$ for every $p\geq 1$ and $M_0=N_0=0$. The fact that $M_p>N_p$ together with \eqref{eq A_p empieza en p}, implies that $(A_{M_p})_p$ is compatible, i.e. 
 $RAT((A_{M_p})_p)=\{N_k\}$.

We will show that the vector
\begin{align*}
y:=\hspace{-.3cm}\sum_{n\in RAT((A_{M_p})_p) }\hspace{-.3cm}e_{n+R}=e_R+\sum_{p=0}^{\infty}\sum_{n\in A_{M_p}}\sum_{j=0}^pe_{n+N_j+R}=e_R+\sum_{p=0}^\infty\sum_{n\in A_{M_p}}\sum_{j=0}^{N_p+R}y_je_{n+j}=\sum_{j=0}^\infty e_{N_j+R}.
\end{align*}
is a (well-defined) frequently recurrent vector such that $\|y-e_R\|<\varepsilon.$ The well-definition and the inequality $\|y-e_R\|<\varepsilon$ follow from \eqref{eq: sumas de e_j norma chica (zero one freq)}. 

We prove that $y$ is frequently recurrent. 
 Fix $q\in\zN$ and $m\in A_{M_{q}}$. We will show that $B^m(y)$ is close to $\sum_{j=0}^{N_{q}+R}y_je_j$.

 Recall that by \eqref{eq separation zero one freq}, since $k_p>p$ for every $p,$ we have that for every $p\in\mathbb N$ and $n\in A_{M_p}$, $|n-m|>M_p+M_{q}+2R>N_p+N_{q}+2R$. Thus, if $n<m$ and $n\in A_{M_{p}}$, then $n-m+N_j+N_q+R<0$ for every $0\le j\le p$. Hence,
$$B^m\left(\sum_{n\in A_{M_p},n<m}\sum_{j=0}^pe_{n+N_j+R}\right)=\sum_{n\in A_{M_p},n<m}\sum_{j=0}^pe_{n-m+N_j+R}=0.$$
Thus, noting that $m$ belongs only to $A_{M_{q}}$, because the $A_{M_p}$'s are pairwise disjoint, we get that
\begin{align*}
B^m(y)-\sum_{j=0}^{N_{q}+R}y_je_j &=B^m(y)-\sum_{j=0}^{q}e_{N_j+R}\\
&=\sum_{p\neq q}\sum_{n\in A_{M_p},n>m}\sum_{j=0}^{p}e_{n-m+N_j+R}+\sum_{n\in A_{M_{q}},n\geq m}\sum_{j=0}^{q}e_{n-m+N_j+R}-\sum_{j=0}^qe_{N_j+R}\\
&=\sum_{p=0}^\infty \sum_{n\in A_{M_p},n>m}\sum_{j=0}^pe_{n-m+N_j+R}.
\end{align*}
By Proposition \ref{prop: condic necesaria vectores recurrentes},  for $0\le j\le p,$ $n\in A_{M_p}$,
$$
B^{n+N_j}x\in U_0,\quad\text{and in particular,}\quad \| B^{n+N_j}x -x\|_{k_0+m_0}<\varepsilon_0.
$$
Then since $|x_R|>\delta,$ we have by \eqref{eq: beta_p (zero one freq) UNIL} that 
\begin{equation*}
    |x_{n+N_j+R}|\ge |x_R|-|x_{n+N_j+R}-x_R| \ge \delta - \frac{\delta}{4}.
\end{equation*}

Since by \eqref{eq separation zero one freq}, $n-m>k_{M_p}+k_{M_{q}}+R>k_{M_q}>k_{N_q}$ whenever $n\in A_{M_p}$ and $n>m$,  we have  $[\sum_{l=0}^{k_{N_{q}}}x_le_l]_{n-m+N_j+R}=0$ for each $0\le j\le p$. Hence it follows that  
  \begin{align}\label{eq: B^my final}
 \begin{split}
 \left\| B^m(y)-\sum_{j=0}^{N_{q}+R}y_je_j\right\|_q&=
 \left\|\sum_{p=0}^\infty \sum_{n\in A_{M_p},n>m}\sum_{j=0}^pe_{n-m+N_j+R}\right\|_q\\
 &=\left\|\sum_{p=0}^{\infty}\sum_{n\in A_{M_p},n>m}\sum_{j=0}^p\frac{1}{x_{n+N_j+R}}\cdot{x_{n+N_j+R}}e_{n-m+N_j+R}\right\|_q\\
 &=\left\|\sum_{p=0}^{\infty}\sum_{n\in A_{M_p},n>m}\sum_{j=0}^p\frac{1}{x_{n+N_j+R}}\cdot[B^m(x)-\sum_{l=0}^{k_{N_{q}}}x_le_l]_{n-m+N_j+R}e_{n-m+N_j+R}\right\|_q\\
 &\leq \frac{4C_q}{\delta}\left(\|B^m(x)-x\|_{m_q}+\|x-\sum_{l=0}^{k_{N_{q}}}x_le_l\|_{m_q}\right)\\
 & 
\le \frac{4C_q}{\delta}\left(\|B^m(x)-x\|_{k_{M_q}+m_{M_q}}+\|x-\sum_{l=0}^{k_{N_{q}}}x_le_l\|_{m_{N_q}}\right)< \frac{\varepsilon}{2^{N_{q}+1}},
  \end{split}
 \end{align}
where in the last inequality we have used \eqref{eq: beta_p (zero one freq) UNIL} and the definition of $A_{M_p}$ in \eqref{eq: A_p zero one freq}.

Finally let $V$ be an open set around $y$. Then since  ${N_q}\to\infty$ as $q\to\infty,$    $\sum_{j=0}^{N_q+R} y_je_j\to y$ and hence,  for big enough  $q\in\mathbb N$, it follows by \eqref{eq: B^my final} that $A_{M_{q}}\sub N_B(y,V).$
This shows that $y$ is a frequently recurrent vector.

\end{proof}

\subsection{First characterization of frequently recurrent backward shifts\label{section 0-1 general case, primera caracterizacion}}
A careful look to the proof of Theorem \ref{0-1 law frequent recurrence} allows us to isolate  the conditions that provide our first characterization for frequently recurrent backward shifts.

\begin{theorem}[First characterization of frequent recurrence]\label{Characterization frequent recurrence}
{Let $X$ be a Fr\'echet sequence space with unconditional basis $(e_n)_{n\geq 0}$. The following are equivalent: }
\begin{enumerate}
\item $B$ is frequently recurrent;
\item
 there are subsets $A_p$ with positive lower density 
 and a sequence $\varepsilon_p\to 0$,  such that
\begin{enumerate}
    \item [i)]for every $p\in\mathbb N_0$, $\sum_{n\in A_p}e_{n+p}$ converges; 
    \item [ii)]
    for any $q\in\mathbb N_0$ and every  $m\in A_q$,
    $$
    \left\|\sum_{\overset{N\in RAT((A_p)_p)}{N>m}} e_{N-m}\right\|_q<\varepsilon_q.
    $$
\end{enumerate}

\end{enumerate}
\end{theorem}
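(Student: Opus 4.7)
\medskip
\noindent\textbf{Proof plan.} The strategy is to recognize $(1)\Rightarrow(2)$ as essentially a re-reading of the construction carried out in the proof of Theorem~\ref{0-1 law frequent recurrence} (isolated already in Remark~\ref{remark caract unilat}), and to prove the converse $(2)\Rightarrow(1)$ by building a frequently recurrent vector out of the raw data $(A_p,N_k,\varepsilon_p)$ following the same blueprint, then invoking the zero–one law to conclude full frequent recurrence.

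\medskip
For $(1)\Rightarrow(2)$, assume $B$ is frequently recurrent. Then $B$ has in particular a non-zero frequently recurrent vector, and we may run the argument of Theorem~\ref{0-1 law frequent recurrence} with $R=0$. The construction produces a strictly increasing sequence $N_0=0<N_1<N_2<\dots$, pairwise disjoint sets $A_{N_p}$ of positive lower density, and a frequently recurrent vector $y=\sum_{j\geq 0}e_{N_j}$, with estimates summarized in Remark~\ref{remark caract unilat}. Setting $A_p:=A_{N_p}$, condition (ii) is literally the remark's bound. For (i), note that the remark gives convergence of $\sum_{n\in A_p}e_{n+N_p}$; since $N_p\geq p$ (as the $N_k$ are strictly increasing integers starting at $0$), applying the continuous operator $B^{N_p-p}$ produces $\sum_{n\in A_p}e_{n+p}\in X$.

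\medskip
For $(2)\Rightarrow(1)$, given the data in (2), define the partial vectors
\[
y^k\ :=\ e_0+\sum_{p=0}^{k}\sum_{n\in A_p}\sum_{j=0}^{p}e_{n+N_j}
\]
and show they converge in $X$ to some $y\in X$, which should equal $\sum_{j\geq 0}e_{N_j}$. Cauchyness in the seminorm $\|\cdot\|_q$ will come by taking $m=\min A_p$ for large $p$ in (ii) and using the disjointness/separation encoded in the fact that $A_p\subseteq A=\{N_k\}$, so that the tail sum $\sum_{p\geq k}\sum_{n\in A_p}\sum_{j\leq p}e_{n+N_j}$ is dominated by the sum controlled in (ii). The decisive computation is that for any $m\in A_q$,
\[
B^m(y)\ -\ \sum_{j=0}^{q}e_{N_j}\ =\ \sum_{p=0}^{\infty}\sum_{\substack{n\in A_p\\ n>m}}\sum_{j=0}^{p}e_{n-m+N_j},
\]
because the terms with $n<m$ are shifted to negative indices and disappear, while the unique term with $n=m$ (which occurs only for $p=q$, since the $A_p$ are disjoint) yields exactly $\sum_{j\leq q}e_{N_j}$. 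By (ii) the right-hand side has $\|\cdot\|_q$-norm below $\varepsilon_q$. Since $\sum_{j\leq q}e_{N_j}\to y$ as $q\to\infty$ and $A_q$ has positive lower density, this shows $y$ is a non-zero frequently recurrent vector. The zero–one law of Theorem~\ref{0-1 law frequent recurrence} then upgrades this to frequent recurrence of $B$.

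\medskip
The main obstacle I expect is bookkeeping rather than a conceptual difficulty: (a) establishing that the iterated sums defining $y$ actually converge in every seminorm $\|\cdot\|_q$, for which one must extract a uniform bound from (ii) by choosing $m$ to be the minimum of $A_q$ (or more carefully the minimum of $A_q$ beyond some threshold, using finite invariance of lower density to discard initial segments as in the proof of Theorem~\ref{0-1 law frequent recurrence}); and (b) identifying the cancellations in $B^m(y)-\sum_{j\leq q}e_{N_j}$, which requires the disjointness of the $A_p$ inside $A=\{N_k\}$ and the fact that $\{N_j:0\leq j\leq q\}$ exhausts precisely the coordinates of $y$ with index $\leq N_q$. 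Once these two points are set up, condition (ii) delivers the frequent recurrence estimate essentially for free.
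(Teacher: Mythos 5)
Your $(1)\Rightarrow(2)$ is exactly the paper's argument: run the proof of Theorem~\ref{0-1 law frequent recurrence} with $R=0$, read off Remark~\ref{remark caract unilat}, and apply $B^{N_p-p}$ to pass from $\sum_{n\in A_{N_p}}e_{n+N_p}$ to $\sum_{n\in A_{N_p}}e_{n+p}$. No issues there.

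The converse is where your plan departs from the paper's and has a genuine gap. You set $y=e_0+\sum_{p}\sum_{n\in A_p}\sum_{j=0}^{p}e_{n+N_j}$ directly from the data in (2), but the block at stage $p$ contains the piece $\sum_{n\in A_p}e_{n+N_p}$, and condition (i) only gives convergence of $\sum_{n\in A_p}e_{n+p}$. Since $A$ contains sets of positive lower density, in general $N_p>p$, and the backward shift only lets you pass from the shift $p$ to \emph{smaller} shifts $l\le p$ (apply $B^{p-l}$); there is no forward shift available to reach $e_{n+N_p}$ from $e_{n+p}$. So your $y$ need not be a well-defined element of $X$, and your proposed remedy (extract convergence from (ii) with $m=\min A_q$) fails for the same reason: (ii) controls the translated sum $\sum e_{n-m+N_j}$, in a single seminorm $\|\cdot\|_q$ only, and the translation by $m$ cannot be undone. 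This is precisely what the paper's re-indexing is for: it defines $N'_k=\min\{n>N'_{k-1}:n\in\bigcup_{j<k}A_{p_j}\}$ and chooses $p_k>N'_k$, placing on the block $A_{p_k}$ only the $k+1$ shifts $N'_0,\dots,N'_k$, all smaller than $p_k$, so that (i) applied to $A_{p_k}$ together with finite invariance yields convergence of every piece. Two further points you label as bookkeeping also need real input. First, the disjointness and the separation $|n-m|>N_p+N_q$ (needed both to annihilate the $n<m$ terms of $B^m(y)$ and to isolate the $n=m$ term) are \emph{not} ``encoded in $A_p\subseteq A$''; they must be arranged by an explicit application of the Separation Lemma~\ref{separation lemma}, after which one checks that (i) and (ii) survive the passage to subsets via unconditionality. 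Second, the vectors your orbit approximates, $\sum_{j\le q}e_{N_j}$, converge to $\sum_{j}e_{N_j}$, which in general differs from your $y$, because the support of $y$ also contains indices $n+N_j$ with $j\ge 1$ that need not lie in $A$; an orbit that frequently approaches a vector other than $y$ does not make $y$ recurrent. The paper instead estimates against the truncations $\sum_{j=0}^{N'_{m_s}}y_je_j$ of $y$ itself, which automatically converge to $y$.
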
  

\begin{proof}[Proof of $(1)\Rightarrow (2)$.] 
     This is a consequence of the proof of Theorem \ref{0-1 law frequent recurrence}. Indeed, we have seen there that if $B$ is frequently recurrent then taking $R=0$,  the sequences $(N_j)_j$, $(A_{M_q})_q$ and $(\varepsilon_p)_p=(\frac{\varepsilon}{2^{p+1}})_p$ satisfy condition $i)$ by \eqref{eq: sumas de e_j norma chica (zero one freq)} and condition $ii)$ by \eqref{eq: B^my final}.  
\end{proof}    

For the converse, we wish to assume that the sets $A_p$ are well separated. This is not longer an immediate application of separation Lemma \ref{separation lemma} because we don't know whether condition $ii)$ remains to hold when we take subsets of the $A_p's$.

To avoid these difficulties we will use the following lemma which after an application of the separation Lemma considers an appropriate subsequence $A_{M_p}'\sub A_{M_p}$ which form a compatible sequence. We will then show that the estimations $i)$ and  $ii)$ also hold for $RAT((A_{M_p}')_p)$.

{ 
\begin{lemma}\label{lem: RAT subconjuntos}
    Let $(A_p)_p$ be a sequence of subsets of $\zN$  and let  $A_p'\sub A_p\cap [N_p,\infty)$ be  a sequence of subsets. Then  there is a subsequence  $(M_p)_p$ (which can be chosen to grow as fast as desired)  such that 
    $(A_{M_p}')_p$ is a compatible sequence and $RAT((A_{M_p}')_p)\sub RAT((A_p)_p)$.
\end{lemma}

\begin{proof}
Let $N_1'=\min A_0'$. Since by Remark \ref{rem:A_0 contenido en RAT}, $A_0'\sub A_0\sub (N_k)_k$, there is $k_0\in \zN$ such that $N_1'=N_{k_0}$. 
Then $(A_{p_0}'+N_1')\cup A_{p_0}'\sub  (A_{p_0}+N_{k_0})\cup A_{p_0}\subseteq RAT((A_p)_p)$ for every $p_0\ge k_0$.
Thus if $M_1>\max\{k_0,N_1'\}$ then 
$$
\bigcup_{p=0}^{1} \bigcup_{j=0}^p A_{M_p}'+N_j'= A_0'\cup A_{M_1}'\cup (A_{M_1}'+N_1')\subseteq RAT((A_p)_p).
$$
Suppose now that we have defined $M_1,\ldots,M_{k-1}$, such that, 
\begin{enumerate}
    \item[a)] $N_{l}'=\min_{n>N_{l-1}'} \bigcup_{p=0}^{l-1} \bigcup_{j=0}^p A_{M_p}'+N_j' \in \{N_0,N_1,\dots,N_{M_l}\}$ for every $1\leq l\leq k-1$,
   \item[b)] $M_{l}>N_l'$ for every $1\leq l\leq k-1$,
    \item [c)]$\bigcup_{p=0}^{k-1} \bigcup_{j=0}^p A_{M_p}'+N_j'\subseteq RAT((A_p)_p)$.
\end{enumerate}

Let $N_k'=\min_{n>N_{k-1}'}\bigcup_{p=0}^{k-1} \bigcup_{j=0}^p A_{M_p}'+N_j'$. By Remark \ref{rem:A_0 contenido en RAT} and the fact that $A_p'\sub A_p\cap [N_p,\infty)$ it follows for $j\le p$ that since $N_j'<M_p<N_{M_p}$,  $A_{M_p}'+N_j' \in (N_i)_i$ for every $0\le j\le p\le k-1$. This implies that $N_k'\in (N_i)_i$.

Thus we have for large enough $p$, 
\begin{equation*}
    A_{p}'+N_j'\subseteq A_p+ N_j'\subseteq RAT((A_p)_p)
\end{equation*} 
for every $0\leq j\leq k$. So, we choose  $M_k$ large enough so that $A_{M_{k}}'+N_j'\subseteq RAT((A_p)_p)$ for every $0\leq j\leq k$, $M_k>N_k'$. Notice that, by c) we have  $\bigcup_{p=0}^{k} \bigcup_{j=0}^p A_{M_p}'+N_j'\subseteq RAT((A_p)_p)$.

By c) we obtain that $RAT((A_{M_p}')_p)\subseteq RAT((A_{p})_p)$.

Moreover, since $A_p'\sub [N_p,\infty)$ and  $M_p>N_p'$, Remark \ref{rem:A_0 contenido en RAT} implies that $(A_{M_p}')_p$ is compatible.
\end{proof}
\begin{remark}
    \label{rem: cosas del lemma que usamos}
We note for later use that the sequence $(M_p)_p$ in the above lemma can be chosen so that 
$$
N_{l}'\in \{N_0,N_1,\dots,N_{M_l}\}  \quad \text{ and, }\quad M_{l}>N_l' \text{ for every }l\ge1,
$$
\end{remark}
}

\begin{proof}[Proof of  $(2)\Rightarrow (1)$ in Theorem \ref{Characterization frequent recurrence}]    
 by Theorem \ref{0-1 law frequent recurrence} it suffices to construct a nonzero frequently recurrent vector near $e_0$. Let $(N_k)_k$ be the sequence associated to $RAT((A_p)_p)$, i.e. $N_k=\min \{n>N_{k-1}: n\in \bigcup_{p=0}^{k-1} \bigcup_{j=0}^p A_p+N_j\}$ and $N_0=0.$

We begin applying the separation Lemma \ref{separation lemma} to obtain sets of positive lower density $A_p'\subseteq A_p$  such that 
\begin{equation}\label{eq: caracterizacion 1 separation de los A_p}
    |n-m|>p+q
\end{equation} whenever $n\in A_p',m\in A_q'$ and $n\neq m$. By $i)$ and unconditionality,  $\sum_{n\in A_p'} e_{n+j}\in X$; and by finite invariance we may suppose that  $A_p'\sub [N_{p}+1,\infty)$ and that 
\begin{equation}\label{eq: caracterizacion 1 finitely invariance}
    \|\sum_{n\in A_p'} e_{n+j}\|\leq \frac{1}{2^p}
\end{equation} for every $0\le j\le p$.

By Lemma \ref{lem: RAT subconjuntos}, there is a subsequence $(M_p)_p$ such that $(A_{M_p}')_p$ is compatible and $RAT((A_{M_p}')_p)\sub RAT((A_p)_p)$.
 Moreover, if $RAT((A_{M_p}')_p)=(N_j')_j$, $N_{l}'=\min_{n>N_{l-1}'} \bigcup_{p=0}^{l-1} \bigcup_{j=0}^p A_{M_p}'+N_j'$ for every $l$ and we may assume that 
\begin{enumerate}
   \item[a)] \label{eq: caracterizacion 1 Los M_l son grandes} $M_{l}>\max\{m_{l},N_l'\}$ for every $1\leq l$, (see Remark \ref{rem: cosas del lemma que usamos})
   \item [b)] $\varepsilon_{M_l}<\varepsilon_{M_{l-1}}$ for $1\leq l$, and,
    \item [c)] $C_l\cdot\varepsilon_{M_l}<\frac{1}{l}$ for every $1\leq l$,
\end{enumerate}
where $C_l$ and $m_l$ are the constants arising     from the unconditionality of the basis.



We have so far obtained sets of positive lower density $A_{M_p}'$  satisfying the separation property \eqref{eq: caracterizacion 1 separation de los A_p} and the estimation \eqref{eq: caracterizacion 1 finitely invariance}. Moreover, since $RAT((A_{M_p}')_p)\subseteq RAT((A_{p})_p)$, we have by unconditionality  that $RAT((A_{M_p}')_p)$ satisfy the inequality in $ii)$ for  $\varepsilon_q'=\frac1{q}$
, i.e. for every $q\in \zN$ and $m\in A_{M_{q}}'$ we have that
\begin{equation}\label{eq: norma de la suma RAT(A') <1/q}
\left\|\sum_{\overset{N\in RAT((A_{M_p}')_p)}{N>m}} e_{N-m}\right\|_q<\frac{1}{q}.    
\end{equation}

Indeed, let $m\in A_{M_q}'$. We have by the separation property  \eqref{eq: caracterizacion 1 separation de los A_p}    together with b) that for $n\in A_{M_p}'$,
\begin{equation}\label{eq: separation Ap' first characterization}
|m-n|>M_{q}+M_p> M_{q}+N_p'.  
\end{equation}
Thus $\displaystyle \sum_{\overset{N\in RAT((A_{M_p}')_p)}{N>m}} e_{N-m} =    \sum_{p=0}^\infty\sum_{j=0}^p\sum_{\overset{n\in A_{M_{p}}'}{n+N_j'>m}} e_{n-m+N_j'}=    \sum_{p=0}^\infty\sum_{j=0}^p\sum_{\overset{n\in A_{M_{p}}'}{ n>m}} e_{n-m+N_j'}$. 

Moreover, by Remark \ref{rem: cosas del lemma que usamos} we have that for every $p\in \zN$, $\{N_1',\ldots ,N_{p}'\}\subseteq \{N_1,\ldots , N_{M_p}\}$  and hence, using unconditionality,  a) and c), we obtain that
\begin{align}
\nonumber \left\|\sum_{\overset{N\in RAT((A_{M_p}')_p)}{N>m}} e_{N-m}\right\|_q & =    \left\|\sum_{p=0}^\infty\sum_{\overset{n\in A_{M_{p}}'}{ n>m}}\sum_{j=0}^p e_{n-m+N_j'}\right\|_q \leq C_q\cdot \left\|\sum_{p=0}^\infty\sum_{\overset{n\in A_{M_p}}{n>m}}\sum_{j=0}^{M_p} e_{n-m+N_j}\right\|_{m_q}\\
 \label{eq: norma de la suma n>m <1/q}   & \leq C_q\cdot\left\|\sum_{p=0}^\infty\sum_{\overset{n\in A_{M_p}}{n>m}}\sum_{j=0}^{M_p} e_{n-m+N_j}\right\|_{M_q}
     <C_q\cdot \varepsilon_{M_{q}}
    <\frac{1}{q}. 
\end{align}

The hard work has already been done. Denoting $A'=RAT((A_{M_p}')_p)$, we have that $1_{A'}=\sum_{n\in A'} e_n=\sum_{j=0}^\infty e_{N_j'}=\sum_{p=0}^\infty\sum_{n\in A_{M_{p}}'}\sum_{j=0}^p e_{n+N_j'} $ converges because, by \eqref{eq: caracterizacion 1 finitely invariance}, $\sum_{p=0}^\infty \|\sum_{n\in A_{M_p}'} \sum_{0\leq j\leq p} e_{n+N_j'}\|\leq \sum_{p=0}^\infty \frac{p+1}{2^p}<\infty$. We show next that $1_{A'}$ is a frequently recurrent vector.

Let $m\in A_{M_{q}}'$. Then for $N\in A_{M_p}'$, $N<m$ we have  that $ B^m(e_{N})=0$. Hence, by \eqref{eq: norma de la suma n>m <1/q},
\begin{align*}
\left\|B^{m}(1_{A'})-\sum_{j=0}^{q} e_{N_j'}
\right\|_q & = \left\|\sum_{\overset{N\in A'}{N\ge m}} e_{N-m} -\sum_{j=0}^{q} e_{N_j'}\right\|_q \\
&= \left\|\left(\sum_{p=0}^\infty\sum_{\overset{n\in A_{M_{p}}'}{n>m}}\sum_{j=0}^p e_{n-m+N_j'}+\sum_{j=0}^{q} e_{N_j'}\right)-\sum_{j=0}^{q} e_{N_j'}\right\|_q
<\frac{1}{q},    
\end{align*}
which implies that $1_{A'}$ is frequently recurrent.
\end{proof}

With the same proof we can present another characterization which follows the spirit of the characterization of frequent hypercyclicity from Theorem \ref{caracterizacion hiperciclicidad frecuente} obtained by Bonilla and Grosse-Erdmann in \cite{BonGro18}.
If the sets $(A_p)_p$ are sufficiently separated then the equivalence $(2)$ in the following just rephrases Theorem \ref{Characterization frequent recurrence}. The equivalence $(3)$ is a consequence of the proof of Theorem \ref{Characterization frequent recurrence}. 
\begin{theorem}\label{coro: first Characterization frequent recurrence}
{Let $X$ be a Fr\'echet sequence space with unconditional basis $(e_n)_{n\geq 0}$. The following are equivalent: }
\begin{enumerate}
\item $B$ is frequently recurrent;

\item  there  are subsets $A_p\subseteq \zN$ with positive lower density, $(N_k)_k$ defined by the relation
 $N_k=\min_{n>N_{k-1}} \{n\in\bigcup_{p=0}^{k-1}\bigcup_{j=0}^p A_p+N_j\}$  
  and a sequence $\varepsilon_p\to 0$,  such that
\begin{enumerate}
    \item [i)]for every $p\in\mathbb N_0$, $\sum_{n\in A_p}e_{n+p}$ converges; 
    \item [ii)]
    for any $q\in\mathbb N_0$ and every  $m\in A_q$,
    $$
    \left\|\sum_{p=0}^\infty \sum_{n\in A_p, n>m} \sum_{j=0}^p e_{n-m+N_j}\right\|_q<\varepsilon_q.
    $$

\end{enumerate}

\item  there is a compatible sequence  $(A_p)_p$ formed by subsets $A_p\subseteq \zN$ with positive lower density such that
$$
\sum_{n\in RAT((A_p)_p)}\hspace{-.5cm}e_{n}\quad\text{is a frequently recurrent vector.}
$$
\end{enumerate}

 Moreover in (2) we may assume that the sequence $(A_p)_p$ satisfies:
\begin{itemize}
        \item it is well separated: for every $p,q\in\mathbb N_0$ and every $n\in A_p$, $m\in A_q$ with $n\neq m$ we have that $|n-m|>p+q.$
    \item for every $p\in \zN_0$, $\|\sum_{n\in A_p} e_{n+N_p}\|$ is arbitrarily small, and
    \item it is compatible: $\{N_j\}=RAT((A_p)_p).$
\end{itemize}

\end{theorem}

This characterization of frequent recurrence is similar to the characterization of frequent hypercyclicity from Theorem \ref{caracterizacion hiperciclicidad frecuente} obtained by Bonilla and Grosse-Erdmann in \cite{BonGro18}, but it is weaker in two aspects: while condition i) is the same on both characterizations, we require in ii) an upper bound which depends only on $\varepsilon_q$ (and not on the minimum between  $\varepsilon_p$ and $\varepsilon_q$). Also, the bound of ii) holds only for a subset of the natural numbers $A=\{N_k\}$, while in \cite{BonGro18} the authors need $A=\mathbb N_0$. Of course, the case $A=\mathbb N_0$ is a sufficient condition for frequent recurrence.
\begin{corollary}\label{Corolario zN}
Let $X$ be a Fr\'echet sequence space with unconditional basis $(e_n)_{n\in \mathbb N}$. Suppose that there are sets $A_p$ with positive lower density such that
\begin{enumerate}
\item [i)]for every $p\in\mathbb N_0$, $\sum_{n\in A_p}e_{n+p}$ converges 
    \item [ii)]
    For every $q\in\mathbb N_0$ and every  $m\in A_q$,
    $$\|\sum_{p=0}^\infty \sum_{n\in A_p, n>m} \sum_{j=0}^p e_{n-m+j}\|_q<\varepsilon_q.$$
    \end{enumerate}Then $B$ is frequently recurrent.
\end{corollary}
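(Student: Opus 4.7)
The plan is to derive this corollary as a direct application of Theorem \ref{Characterization frequent recurrence}, specialized to the case where the set $A$ appearing in condition (2) of that theorem is taken to be all of $\mathbb{N}_0$.

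Concretely, I would set $N_k := k$ for every $k \geq 0$, so that $A = \{N_k : k \geq 0\} = \mathbb{N}_0$ and $N_0 = 0$ as required. The inclusion $A_p \subseteq A$ holds trivially since $A_p \subseteq \mathbb{N}_0 = A$. Condition (i) of Theorem \ref{Characterization frequent recurrence}(2), namely the convergence of $\sum_{n \in A_p} e_{n+p}$, is literally the hypothesis (i) of the corollary. With the choice $N_j = j$, condition (ii) of Theorem \ref{Characterization frequent recurrence}(2) reads
\[
\Bigl\|\sum_{p=0}^\infty \sum_{n\in A_p, n>m} \sum_{j=0}^p e_{n-m+j}\Bigr\|_q<\varepsilon_q \quad \text{for every } q\in \mathbb{N}_0 \text{ and every } m\in A_q,
\]
which is exactly hypothesis (ii) of the corollary (with the usual standing assumption that $\varepsilon_q \to 0$, inherited from the corresponding hypothesis of the theorem).

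Hence all requirements of Theorem \ref{Characterization frequent recurrence}(2) are fulfilled, and the theorem yields that $B$ is frequently recurrent. There is essentially no obstacle: the corollary is precisely pointing out that the weaker hypothesis (where the sum over $j$ runs over $0, 1, \dots, p$ via the indices $n-m+j$, i.e.\ consecutive natural numbers rather than a thinner subsequence $N_j$) is a sufficient, though in general not necessary, criterion for frequent recurrence.
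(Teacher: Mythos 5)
Your proposal is correct and is exactly how the paper obtains this corollary: it is stated immediately after the remark that ``the case $A=\mathbb N_0$ is a sufficient condition for frequent recurrence,'' i.e.\ as the specialization $N_k=k$ of Theorem~\ref{Characterization frequent recurrence}(2), with the sequence $\varepsilon_p\to 0$ implicitly carried over from that theorem. Nothing further is needed.
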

\begin{proof}
The proof is similar: we construct inductively a sequence $(M_p)_p$ such that $M_0=0$,
  $M_{l}>\max\{m_{l},N_l'\}$,
 $\varepsilon_{M_l}<\varepsilon_{M_{l-1}}$ and,
  $C_l\cdot\varepsilon_{M_l}<\frac{1}{l}$ for every $1\leq l$,
where $C_l,m_l$ are the constants from the unconditionality, $N_0'=0$ and $N_k' =\min_{n>N_{k-1}'} \{n\in\bigcup_{p=0}^{k-1}\bigcup_{j=0}^p A_{M_p}+N_j'\}$.

Hence we have,
\begin{align*}
\|\sum_{p=0}^\infty \sum_{n\in A_{M_p}, n>m}\sum_{j=0}^p e_{n-m+N_j}\|_q &\le C_q\|\sum_{p=0}^\infty \sum_{n\in A_{p}, n>m}\sum_{j=0}^p e_{n-m+j}\|_{m_q}\\
&\le C_q\|\sum_{p=0}^\infty \sum_{n\in A_{p}, n>m}\sum_{j=0}^p e_{n-m+j}\|_{M_q}<C_q\varepsilon_{M_q}<\frac1{q}.   
\end{align*}
Thus, the sequence $(A_{M_p})_p$ satisfies the conditions of Theorem \ref{coro: first Characterization frequent recurrence} (2) and hence $B$ is frequently recurrent.
\end{proof}

Weighted backwards shifts in $c_0$ provide a great source of counter-examples in linear dynamics \cite{BayRuz15,BonGro18,BayGri07,Bes16}. It is therefore meaningful to enunciate Theorem \ref{Characterization frequent recurrence} and Corollary \ref{Corolario zN} for the space $c_0$.
\begin{theorem}
Let $X=c_0$ and $B_w$ be a weighted backward shift. Then $B_w$ is frequently recurrent if and only if there are a sets of positive lower density $A_p$  such that 
\begin{enumerate}
    \item [i)] for each $p\in \zN_0$, $\lim_{n\in A_{p}} \prod_{l=1}^{n+p} w_{l}=0$; 
\end{enumerate}    
    and such that one of the following conditions holds
    \begin{enumerate}
        
    \item [ii)]for every $q\in \zN_0$, every $n\in RAT((A_p)_p)$ and every $m\in A_q$ with $n>m$ we have that
$$\prod_{l=1}^{n-m} w_{l}\leq \varepsilon_q$$
for every $0\leq j\leq p$;
    \item [ii')] If $N_k=\min_{n>N_{k-1}} \{\bigcup_{p=0}^{k-1}\bigcup_{j=0}^k A_p+N_j\}$, then for every $p,q\in \zN_0$ and every $n\in A_p$, $m\in A_q$ with $n>m$ we have that
    
$$\prod_{l=1}^{n-m+N_j} w_{l}\leq \varepsilon_q.$$
    \end{enumerate}
\end{theorem}
\begin{corollary}
Let $X=c_0$ and $B_w$ be a weighted backward shift. Suppose that there are sets of positive lower density $A_p$ and a sequence of positive numbers $\varepsilon_p\to 0$ such that 
\begin{itemize}
    \item[(i)] for every $p\in\zN_0$, $\lim_{n\in A_{p}} \prod_{l=1}^{n+p} w_{l}=0$
    \item[(ii)] for every $p,q\in \zN_0$ and every $n\in A_p$, $m\in A_q$ with $n>m$ we have that
$$\prod_{l=1}^{n-m+j} w_{l}\leq \varepsilon_q$$
for every $0\leq j\leq p$.
\end{itemize} Then $B$ is frequently recurrent.
\end{corollary}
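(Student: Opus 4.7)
The plan is to deduce this corollary from Corollary \ref{Corolario zN} via the standard conjugation between weighted and unweighted shifts recalled in the preliminaries. Set $v_n:=\prod_{l=1}^n w_l^{-1}$ and let $c_0(v)=\{x\,:\,(x_n v_n)\in c_0\}$ with norm $\|x\|_{c_0(v)}=\sup_n|x_n v_n|$, so that the map $\phi\colon c_0(v)\to c_0$, $\phi(x)=(x_n v_n)$, is an isometric isomorphism intertwining the unweighted backward shift $B$ on $c_0(v)$ with $B_w$ on $c_0$. Since frequent recurrence is invariant under conjugacy, it suffices to verify conditions (i) and (ii) of Corollary \ref{Corolario zN} for $B$ on $c_0(v)$, using the same sets $(A_p)_p$ and the same sequence $(\varepsilon_p)_p$.

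For condition (i), note that $\|e_k\|_{c_0(v)}=|v_k|=\prod_{l=1}^k|w_l|^{-1}$, so convergence of $\sum_{n\in A_p}e_{n+p}$ in $c_0(v)$ is equivalent to $|v_{n+p}|\to 0$ as $n\to\infty$ along $A_p$, which is precisely the content of hypothesis (1) after reading it through the identification $|v_k|=\prod_{l=1}^k|w_l|^{-1}$.

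For condition (ii), the crucial structural observation is that in the $c_0$-topology, the norm of any (even infinite) sum of basis vectors collapses to the supremum of the norms of the individual terms. Thus, for any $m\in A_q$,
$$\Big\|\sum_{p=0}^\infty\sum_{\substack{n\in A_p\\ n>m}}\sum_{j=0}^p e_{n-m+j}\Big\|_{c_0(v)}=\sup\Big\{|v_{n-m+j}|\,:\,p\ge 0,\ n\in A_p,\ n>m,\ 0\le j\le p\Big\}.$$
Hypothesis (2) bounds every term $|v_{n-m+j}|=\prod_{l=1}^{n-m+j}|w_l|^{-1}$ appearing in this supremum by $\varepsilon_q$, so the supremum, and hence the $c_0(v)$-norm above, is also bounded by $\varepsilon_q$.

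Both hypotheses of Corollary \ref{Corolario zN} being verified, that corollary gives frequent recurrence of $B$ on $c_0(v)$, and pulling back through $\phi$ yields frequent recurrence of $B_w$ on $c_0$. There is really no substantive obstacle here: the only point to keep in mind is that the $c_0$-geometry reduces condition (ii) of the general criterion, which is a seminorm bound on an infinite series, to the uniform pointwise bound on weight products that is assumed in hypothesis (2).
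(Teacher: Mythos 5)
Your overall route is the intended one: the paper gives no separate argument for this corollary, deriving it (implicitly) from Corollary \ref{Corolario zN} through the conjugation $v_n=\prod_{l=1}^n w_l^{-1}$, $\phi\colon c_0(v)\to c_0$ recalled in the preliminaries, exactly as you do, and the reduction of the seminorm condition to a pointwise bound on the $|v_k|$ is indeed the whole content of the $c_0$ case. Two points, however, need attention.

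First, under the paper's conjugation one has $\|e_k\|_{c_0(v)}=|v_k|=\prod_{l=1}^k|w_l|^{-1}$, so conditions i) and ii) of Corollary \ref{Corolario zN} translate into $\prod_{l=1}^{n+p}|w_l|^{-1}\to 0$ along $A_p$ and $\prod_{l=1}^{n-m+j}|w_l|^{-1}\le\varepsilon_q$, i.e.\ into bounds on the \emph{reciprocals} of the products appearing in hypotheses (1) and (2) as printed. You assert that hypothesis (1) "is precisely" $|v_{n+p}|\to 0$ and that hypothesis (2) "bounds every term $|v_{n-m+j}|$"; literally it does not. As printed, the hypotheses are satisfied (for suitably separated $A_p$ and $\varepsilon_q$) by the constant weight $w_l\equiv 1/2$, for which $\|B_w^n\|\to 0$ on $c_0$, so the only recurrent vector is $0$ and the operator is not frequently recurrent. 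Hence the statement only makes sense with $w_l$ replaced by $w_l^{-1}$ (an apparent typo, consistent with the conjugation in the preliminaries), and your argument proves that corrected statement. You should make the correction explicit instead of absorbing it into an "identification".

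Second, the identity $\bigl\|\sum_{p}\sum_{n}\sum_{j} e_{n-m+j}\bigr\|_{c_0(v)}=\sup|v_{n-m+j}|$ is only valid when the indices $n-m+j$ occurring in the triple sum are pairwise distinct; otherwise coefficients accumulate at a single position, whose multiplicity is not bounded a priori, and the supremum of the individual terms underestimates the norm. This is easily repaired: first apply the Separation Lemma \ref{separation lemma} with $k_p>p$ to replace the $A_p$ by subsets of positive lower density satisfying $|n-n'|>k_p+k_{p'}$ for $n\in A_p$, $n'\in A_{p'}$, $n\ne n'$. Hypotheses (1) and (2) pass to subsets, and the separation forces the triples $(p,n,j)$ to produce pairwise distinct indices $n-m+j$, after which your supremum computation is correct. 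With these two amendments the deduction from Corollary \ref{Corolario zN} is complete.
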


\subsection{Second characterization of frequent recurrence and an application to dense lineability. }\label{subsection segunda aplicacion}
We will now show another equivalence for frequent recurrence in terms of the existence of  orbits with wild behavior. As an application we will show that the set of frequently recurrent vectors is dense lineable. Given a set $A\subseteq \zN_0$ we consider $\mathfrak B_M(A)$ the set of vectors supported in $A$ that have coordinates bounded by $M>0$, i.e.
$$\mathfrak B_M(A):=\{x\in X: supp(x)\subseteq A \text{ and } |x_n|<M \text { for every } n\geq 0\}.$$

We recall the following definition from \cite {Gro19bilateral}:  given $T:X\to X$ and a subset $Y\subseteq X$ we say that $x\in X$ is a \emph{frequently hypercyclic vector for $Y$} if for every open set $U$ such that $U\cap Y\neq \emptyset$ we have that $N_T(x,U)$ has positive lower density.

\begin{theorem}[Second characterization of frequent recurrence] \label{thm: caract S_1(A)}
Let $X$ be a Fr\'echet sequence space with unconditional basis $(e_n)_{n\ge 0}$. Then $B$ is frequently recurrent if and only if there are $A\subseteq \mathbb N_0$ with $\underline {dens}(A)>0$ and $x\in X$, supported in $A$, such that $x$ is a frequently hypercyclic vector for $\mathfrak B_1(A)$. 
\end{theorem}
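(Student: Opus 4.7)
I would first invoke Theorem \ref{Characterization frequent recurrence} to extract $A=\{N_k:k\ge 0\}$ with $N_0=0$, subsets $A_p\sub A$ of positive lower density, and $\varepsilon_p\to 0$ satisfying conditions (i) and (ii). The plan is then to enrich the construction from the proof of Theorem \ref{0-1 law frequent recurrence} by planting, instead of shifted copies of basis vectors, shifted copies of an enumeration of a countable dense family of $\mathfrak B_1(A)$. Concretely, fix $\{v^{(\ell)}\}_{\ell\ge 0}\sub\mathfrak B_1(A)$ dense, with $v^{(\ell)}=\sum_{j\in F_\ell}c_j^{(\ell)}e_{N_j}$ finitely supported, $\max F_\ell\le p_\ell$, $p_\ell\to\infty$, and coefficients $|c_j^{(\ell)}|<1$ drawn from a countable dense subset of $\{|z|<1\}$; then apply the Separation Lemma \ref{separation lemma} to $(A_{p_\ell})_\ell$ to obtain pairwise disjoint $T_\ell\sub A_{p_\ell}$ of positive lower density, separated by more than $N_{p_\ell}+N_{p_{\ell'}}$ across distinct $\ell,\ell'$. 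Set
\[
x\;=\;\sum_{\ell\ge 0}\sum_{n\in T_\ell}\sum_{j\in F_\ell}c_j^{(\ell)}\,e_{n+N_j}.
\]
Convergence in $X$, the inclusion $supp(x)\sub A$, and $|x_k|<1$ for all $k$ all follow from condition (i), unconditionality, and the pairwise disjointness of the inner supports. For $m\in T_\ell$ the separation kills every term with $n<m$, the diagonal $n=m$ yields exactly $v^{(\ell)}$, and the tail $n>m$ is bounded, via unconditionality and $|c_j^{(\ell')}|<1$, by the sum from condition (ii), giving $\|B^m x-v^{(\ell)}\|_q\le C_q\varepsilon_{p_\ell}$ whenever $q\le p_\ell$. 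Density of $(v^{(\ell)})_\ell$ in $\mathfrak B_1(A)$ then shows that any open $U$ meeting $\mathfrak B_1(A)$ contains $v^{(\ell)}$ for some $\ell$ with $C_q\varepsilon_{p_\ell}$ arbitrarily small, and therefore $T_\ell\sub\{n:B^nx\in U\}$ up to finitely many terms, so that $x$ is a frequently hypercyclic vector for $\mathfrak B_1(A)$.

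\textbf{Backward direction.} By the zero-one law (Theorem \ref{0-1 law frequent recurrence}) it suffices to produce a single non-zero frequently recurrent vector. Pick any non-zero $v\in\mathfrak B_1(A)$ and, using the hypothesis, $n_0$ with $\|B^{n_0}x-v\|$ arbitrarily small, and set $z=B^{n_0}x\ne 0$. For every neighborhood $U$ of $z$ meeting $\mathfrak B_1(A)$ the hypothesis yields $\underline {dens}\{k:B^kx\in U\}>0$, whence the shifted set $\{n\ge 0:B^nz\in U\}=\{k-n_0:k\ge n_0,\,B^kx\in U\}$ also has positive lower density. A small additional argument---arranging $z\in\overline{\mathfrak B_1(A)}$, for instance by choosing $v$ whose support in $A$ is rich and $\|z-v\|$ small in a sufficiently strong seminorm, possibly combined with a coordinate-wise projection onto $A$ and truncation of coordinates of modulus $>1$, both of which factor through unconditional multipliers of modulus at most $1$---extends the property to every neighborhood of $z$, so that $z$ is frequently recurrent; Theorem \ref{0-1 law frequent recurrence} then finishes.

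\textbf{Main obstacle.} The principal technical step is the tail estimate $\|\sum_{\ell'}\sum_{n\in T_{\ell'},\,n>m}\sum_{j\in F_{\ell'}}c_j^{(\ell')}e_{n-m+N_j}\|_q$ for $m\in T_\ell$, which reduces, via unconditionality and $|c_j^{(\ell')}|<1$, exactly to the inequality in condition (ii) of Theorem \ref{Characterization frequent recurrence}; the Separation Lemma takes care of the $n<m$ contributions, and the decay $\varepsilon_{p_\ell}\to 0$ upgrades the pointwise approximation of each $v^{(\ell)}$ to frequent hypercyclicity for $\mathfrak B_1(A)$. The subtlety in the backward direction is ensuring that the constructed $z$ lies in $\overline{\mathfrak B_1(A)}$, so that all---rather than merely sufficiently large---neighborhoods of $z$ meet $\mathfrak B_1(A)$.
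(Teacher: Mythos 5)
Your forward direction is, up to cosmetic re-indexing, the paper's own argument: both proofs plant shifted copies of a dense sequence of finitely supported vectors of $\mathfrak B_1(A)$ (with supports contained in $\{N_0,\dots,N_p\}$) along the separated sets $A_p$ coming from Theorem \ref{Characterization frequent recurrence}, and both reduce the tail estimate to condition (ii) via unconditionality and the bound $|c_j^{(\ell)}|<1$ (modulo the usual $m_q$ versus $q$ bookkeeping in the unconditionality estimate: the bound from condition (ii) must be taken in $\|\cdot\|_{m_q}$, so you need $m\in A_{p_\ell}$ with $p_\ell\ge m_q$, which your choice $p_\ell\to\infty$ provides). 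That part is fine.

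The backward direction is where your main line of reasoning does not close. Passing to $z=B^{n_0}x$ with $\|B^{n_0}x-v\|$ small buys you nothing: closeness of $z$ to some $v\in\mathfrak B_1(A)$ in the $F$-norm does not place $z$ in $\overline{\mathfrak B_1(A)}$ (its coordinates can still exceed $1$ in modulus, and its support sits in $A-n_0$ rather than $A$), so the obstacle you correctly flag at the end --- that the hypothesis only controls neighborhoods meeting $\mathfrak B_1(A)$ --- is untouched by this step. The fix you mention only parenthetically is in fact the entire proof, and it should be applied to $x$ itself, with no shift: set $P_{\mathfrak B_1(A)}x:=\sum_{n\in A}\min\{|x_n|,1\}\,sgn(x_n)\,e_n$. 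Since the scalar truncation $t\mapsto\min\{|t|,1\}sgn(t)$ is $1$-Lipschitz and fixes every scalar of modulus at most $1$, one has $|[B^m(P_{\mathfrak B_1(A)}x)-y]_n|\le|[B^m x-y]_n|$ coordinatewise for every $y\in\mathfrak B_1(A)$, whence by unconditionality $\|B^m(P_{\mathfrak B_1(A)}x)-y\|_q\le C_q\|B^m x-y\|_{m_q}$; thus $P_{\mathfrak B_1(A)}x$ is still frequently hypercyclic for $\mathfrak B_1(A)$, is non-zero, and now lies in $\overline{\mathfrak B_1(A)}$, so every neighborhood of it meets $\mathfrak B_1(A)$ and it is frequently recurrent. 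The zero-one law (Theorem \ref{0-1 law frequent recurrence}) then concludes, exactly as you intended. So your proposal contains all the right ingredients, but you must promote the truncation from a hedge to the actual argument and verify the displayed Lipschitz/unconditionality estimate; the $B^{n_0}$ detour should be dropped.
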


\begin{proof}
If there are $A$ and $x\in X$ supported in $A$ such that $x$ is frequently hypercyclic for $\mathfrak B_1(A)$ then we have by unconditionality that $P_{\mathfrak B_1(A)} x:= \sum_{n\in A} \min\{|x_n|,1\}sgn(x_n) e_n$ is frequently hypercyclic for $\mathfrak B_1(A)$, where $sgn$ denotes either the real or complex sign of a number depending the field of $X$. Indeed, for any $y\in \mathfrak B_1(A)$, $q\in \mathbb N_0$, we have,
$$
\|B^m(P_{\mathfrak B_1(A)} x)-y\|_q\le C_q \|B^m( x)-y\|_{m_q}.
$$

Since $P_{\mathfrak B_1(A)} x$ belongs itself to $\mathfrak B_1(A)$ then it is a frequently recurrent vector. By the zero-one law Theorem \ref{0-1 law frequent recurrence} we deduce that the operator is frequently recurrent.  

Suppose now that $B$ is frequently recurrent. 
By Theorem \ref{coro: first Characterization frequent recurrence} there are $A_p$'s and $\varepsilon_p$'s, 
\begin{itemize}
    \item for every $p,q\in\mathbb N_0$ and every $n\in A_p$, $m\in A_q$ with $n\neq m$ we have that $|n-m|>p+q.$
    \item for every $p\in \zN_0$, $\|\sum_{n\in A_p} e_{n+N_p}\|\leq\frac{1}{(p+1)2^{p+1}}$ and
    \item $(A_p)_p$ is compatible.
\end{itemize}

Let $A=RAT((A_p)_p)=\{N_j\}$.
The above conditions  imply that the vector $z=\sum_{p=0}^\infty\sum_{n\in A_p} \sum_{j=0}^{p} e_{n+N_j}=\sum_{n\in A} e_n$ is in $X$. 

Let $(y^p)_p$ be a dense sequence in $\mathfrak B_1(A)$ such that $supp(y^p)=\{0,N_1,\ldots, N_p\}.$

Consider now the vector $y=\sum_{p=0}^\infty \sum_{n\in A_p} \sum_{j=0}^p y^p_j e_{n+N_j}.$ This vector is well defined, because 
\begin{align*}
\sum_{p=0}^\infty \|\sum_{n\in A_p} \sum_{j=0}^p y^p_j e_{n+N_j}\|&\leq \sum_{p=0}^\infty \frac{1}{ 2^{p+1}}<\infty.    
\end{align*}

We show now that $y$ is a frequently hypercyclic vector for $\mathfrak B_1(A)$. Let $q\in \zN_0$ and $m\in A_{m_q}$, then by condition $(2)$ $ii)$ of Theorem \ref{coro: first Characterization frequent recurrence} and unconditionality,
$$\|B^m(y)-y^{q}\|_q=\|\sum_{p=0}^\infty \sum_{n\in A_p,n>m}\sum_{j=0}^p y^p_j e_{n-m+N_j}\|_q\leq C_q\varepsilon_{m_q}.$$

\end{proof}
We finish the subsection with an application of Theorem \ref{thm: caract S_1(A)}. Recall that a set $F\subset X$ is said to be dense lineable provided that $F\cup\{0\}$ contains a subspace which is dense in $X$. In the context of linear dynamics, it is immediate that the set of (frequently) hypercyclic vectors is dense lineable, because if $x$ is a (frequently) hypercyclic vector then $\{P(T)(x):P \text{ is a polynomial}\}$ is a dense space of (frequently) hypercyclic vectors. However, the same argument breaks for recurrence. In \cite{LopMen25} the authors found a recurrent operator without a dense lineable set of recurrent vectors. It is still an open question whether the set of frequently recurrent vectors is dense lineable, see \cite[Problem 5.1 and Remark 5.3]{GriLopPerQuestions2} and \cite[Problem 4.2]{LopMen25}. We provide a positive answer for backward shift     operators (both for the unilateral and bilateral case). We note also that in \cite[Corollary 4.8]{GriLopPerQuestions2} it was shown that if $T$ supports a dense set of vectors such that $T^nx\to 0$ (in particular if $T$ is a unilateral backward shift) and if the family $\mathcal F$ is an upper right-invariant Furstenberg family $\mathcal F$, then the $\mathcal F$-recurrence of $T$ implies by \cite{BonGroLopPer22JFA} the $\mathcal F$-hypercyclicity of $T$ and in particular the dense lineability for the set of $\mathcal F$-recurrent vectors of $T$. 

\begin{theorem}\label{cyclic and frequently recurrent vector}
Let $X$ be a Fr\'echet sequence space with unconditional basis $(e_n)_{n\in\mathbb N_0}$. Then  $B$ is frequently recurrent if and only if $B$ has a cyclic and frequently recurrent vector.

In particular, if $B$ is frequently recurrent then
\begin{enumerate}
    \item the set of frequently recurrent vectors is dense lineable and moreover,
    \item the set of frequently recurrent vectors of $B\times\cdots \times B:X^n\to X^n$ is dense lineable for every $n\in\mathbb N.$
\end{enumerate}
\end{theorem}
\begin{proof}
If $B$ supports a cyclic and frequently recurrent vector, then the operator is frequently recurrent by the zero-one law Theorem \ref{0-1 law frequent recurrence}.

Conversely suppose that $B$ is frequently recurrent. Then there are, by  Theorem \ref{thm: caract S_1(A)}, a set $A$, with $\underline{dens}(A)>0$, and $x\in X$ supported in $A$ such that $x$ is a frequently hypercyclic vector for $\mathfrak B_1(A)$. By the proof of Theorem \ref{thm: caract S_1(A)} we can suppose that $x\in \mathfrak{B}_1(A)$. In particular, $x$ is frequently recurrent. We claim that $x$ is also a cyclic vector.

Let $y=\sum_{j=0}^N y_j e_j$ and $\varepsilon>0$. For each $n\in \mathbb N$ consider $A_n:=A-n$ and let $n_1,\ldots, n_r$ such that $[0,N]\subseteq \bigcup_{l=1}^r A_{n_l}$. 
Thus, $y$ be can written as $\sum_{l=1}^r y^l$, where each $y^l$ is supported in $A_{n_l}$. Let {$M=\max_{1\le l\le r}\|y^l\|_\infty$}. We have then that for every $1\leq l\leq r$, $M B^{n_l}(x)$ is a frequently hypercyclic vector for $\mathfrak B_M(A_{n_l})$, the set of vectors supported in $A_{n_l}$ with coordinate less than $M$. Thus, there are $k_1,\ldots, k_r$ such that for every $1\leq l\leq r$, $\|B^{k_l}(MB^{n_l}(x))-y^l\|<\frac{\varepsilon}{r}.$
It follows that $\|M\sum_{l=1}^r B^{n_l+k_l}(x)-y\|<\varepsilon.$

To prove $(1)$ just observe that $\{P(B)(x):P\in \zK[x]\}$ is a dense subspace formed by frequently recurrent vectors.
On the other hand it is well known that if an operator admits a cyclic and frequently recurrent vector then the $n$-fold product of the operator is frequently recurrent and with a dense lineable set of frequently recurrent vectors, see  \cite[Proposition 3.8]   {CardeMurBlock} or \cite[Proposition 4.2]{GriLopPerQuestions2}.
 \end{proof}

\subsection{Two intermediate conditions\label{intermediate}}
There are two natural cases in which the backward shift operator satisfies conditions  that are formally weaker than frequent hypercyclicity but stronger than frequent recurrence. The first  one is to consider the operators for which the equivalence (2) of Theorem \ref{coro: first Characterization frequent recurrence} holds for $A=\mathbb N_0$.
 Theorem \ref{thm: caract S_1(A)} implies that, in this case, the backward shift operator is frequently hypercyclic for $\mathfrak B_1=\mathfrak B_1(\mathbb N_0)$, the set of vectors with coordinates smaller than one.
\begin{corollary}\label{Teo: caract freq hyp on S1}
    Let $X$ be a Fr\'echet sequence space with  unconditional basis $(e_n)_{n\ge 0}$. The following are equivalent:
\begin{enumerate}
    \item There are sets $A_p$, $\underline {dens}(A_p)>0$ and a sequence $\varepsilon_p\to 0$ such that
    \begin{enumerate}
        \item [i)] for every $p\ge 0,$ $\sum_{n\in A_p} e_{n+p}\in X$ and
        \item [ii)] for every $q\ge 0$ and $m\in A_q$, $\|\sum_{p=0}^\infty \sum_{n\in A_p,n>m}\sum_{j=0}^p e_{n-m+j}\|_q<\varepsilon_q.
    $    
        \end{enumerate}
\item $B$ is frequently hypercyclic for $\mathfrak B_1$.

\end{enumerate}
\end{corollary}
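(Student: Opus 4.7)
The plan is to adapt the constructive arguments of Theorems \ref{Characterization frequent recurrence} and \ref{thm: caract S_1(A)} to the special case $A=\mathbb N_0$ (so $N_k=k$).

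For $(1)\Rightarrow(2)$, I would follow the construction of Theorem \ref{thm: caract S_1(A)} verbatim. First apply the Separation Lemma \ref{separation lemma} with $k_p=p$ to make the $A_p$ pairwise disjoint with $|n-m|>p+q$ for $n\in A_p$, $m\in A_q$, $n\ne m$. Fix a dense sequence $(y^p)_{p\ge 0}$ in $\mathfrak B_1$ with $\mathrm{supp}(y^p)\subseteq\{0,\ldots,p\}$ and define
$$y:=\sum_{p=0}^\infty\sum_{n\in A_p}\sum_{j=0}^p y^p_j\,e_{n+j}.$$
Convergence in $X$ follows from $|y^p_j|<1$, condition (i), and unconditionality \eqref{incondicionalidad}. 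For $m\in A_q$, condition (ii) together with the separation gives $\|B^m y-y^q\|_q\le C_q\varepsilon_q$, exactly as in the proof of Theorem \ref{thm: caract S_1(A)}; hence $y$ is frequently hypercyclic for $\mathfrak B_1$.

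For $(2)\Rightarrow(1)$, let $x$ be a frequently hypercyclic vector for $\mathfrak B_1$. Projecting onto $\mathfrak B_1$ as in the proof of Theorem \ref{thm: caract S_1(A)}, I may assume $x\in\mathfrak B_1$. Fix $\varepsilon_p\to 0$; by continuity of the coordinate functionals and \eqref{incondicionalidad}, I pick for each $p$ a seminorm index $r_p$ and $\delta_p>0$ such that $\|z-\sum_{j=0}^p e_j\|_{r_p}<\delta_p$ forces $|z_j|>1/2$ for $0\le j\le p$ and $2C_q C_{m_q}\|z-\sum_{j=0}^p e_j\|_{m_{m_q}}<\varepsilon_q$ for every $q\le p$. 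Set
$$A_p:=\Bigl\{n\in\mathbb N_0:\bigl\|B^n x-\textstyle\sum_{j=0}^p e_j\bigr\|_{r_p}<\delta_p\Bigr\}.$$
Every neighbourhood of $\sum_{j=0}^p e_j$ meets $\mathfrak B_1$ (e.g.\ at scalar multiples close to $1$), so the frequent hypercyclicity of $x$ for $\mathfrak B_1$ yields $\underline{dens}(A_p)>0$. For $n\in A_p$, $|x_{n+p}|>1/2$, so unconditionality together with $x\in X$ gives (i).

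The main obstacle is condition (ii), which I would verify by a direct calculation patterned on \eqref{cota B^my, n>m}. Apply the Separation Lemma \ref{separation lemma} once more (with $k_p=p$) so that $|n-m|>p+q$ on distinct $n\in A_p$, $m\in A_q$. Fix $q\in\mathbb N_0$ and $m\in A_q$. The separation forces the indices $k=n-m+j$ (with $n>m$, $n\in A_p$, $0\le j\le p$) to be pairwise distinct and to satisfy $k>q$, so
$$\sum_{p=0}^\infty\sum_{\substack{n\in A_p\\ n>m}}\sum_{j=0}^p e_{n-m+j}=\sum_{p=0}^\infty\sum_{\substack{n\in A_p\\ n>m}}\sum_{j=0}^p \frac{1}{x_{n+j}}\,[B^m x]_{n-m+j}\,e_{n-m+j}.$$
Using $|x_{n+j}|>1/2$ and unconditionality, the $\|\cdot\|_q$-norm of the left-hand side is bounded by $2C_q\|P_K B^m x\|_{m_q}$, where $P_K$ is the projection onto the index set $K\subseteq\{k:k>q\}$ appearing above. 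Since $\sum_{j=0}^q e_j$ is supported in $\{0,\ldots,q\}$, $P_K\sum_{j=0}^q e_j=0$, so one may replace $B^m x$ by $B^m x-\sum_{j=0}^q e_j$ inside $P_K$; the choice of $r_q$ and $\delta_q$ then delivers the required bound $<\varepsilon_q$.
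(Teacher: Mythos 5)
Your proposal is correct and follows the route the paper intends: the corollary is derived there, without a written proof, as the special case $A=\mathbb N_0$ (i.e.\ $N_k=k$) of Theorems \ref{Characterization frequent recurrence} and \ref{thm: caract S_1(A)}, and your two directions reproduce exactly the construction of $y=\sum_p\sum_{n\in A_p}\sum_{j\le p}y^p_je_{n+j}$ from Theorem \ref{thm: caract S_1(A)} and the hitting-time extraction with the division trick of \eqref{cota B^my, n>m}, now with target $\sum_{j=0}^p e_j$ in place of $x$ (which is precisely what makes all of $\mathbb N_0$, rather than only $\{N_k\}$, available). The only adjustments needed are the cosmetic ones the paper itself makes: in $(1)\Rightarrow(2)$ one should take $m\in A_{m_q}$ and conclude $\|B^m y-y^q\|_q\le C_q\varepsilon_{m_q}$, since inserting the coefficients $y^p_j$ via \eqref{incondicionalidad} raises the seminorm index from $q$ to $m_q$ and condition ii) for $m\in A_q$ only controls $\|\cdot\|_q$; and the convergence of $y$ requires the standard finite-invariance shrinking $\|\sum_{n\in A_p}e_{n+p}\|\le (p+1)^{-1}2^{-p-1}$, which you do not mention explicitly.
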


In a similar way to Theorem \ref{cyclic and frequently recurrent vector} we have the following. 
\begin{proposition}
     Let $X$ be a Fr\'echet sequence space. If  $B$ is frequently hypercyclic for $\mathfrak B_1$ then $B$ supports a supercyclic and frequently recurrent vector.
 \end{proposition}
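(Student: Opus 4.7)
The plan is to produce a single vector that is simultaneously supercyclic and frequently recurrent, paralleling the strategy of Theorem \ref{cyclic and frequently recurrent vector} but replacing the cyclicity argument with a rescaling argument that exploits the uniform coordinate bound built into $\mathfrak B_1$.

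First I would take a frequently hypercyclic vector $x$ for $\mathfrak B_1$ provided by the hypothesis and pass to its clipping $P_{\mathfrak B_1}x := \sum_{n\ge 0}\min\{|x_n|,1\}\,sgn(x_n)\,e_n$, which is still frequently hypercyclic for $\mathfrak B_1$ by the very same seminorm estimate ($\|B^m(P_{\mathfrak B_1}x)-y\|_q\le C_q\|B^m(x)-y\|_{m_q}$) used in the proof of Theorem \ref{thm: caract S_1(A)}. Renaming this vector $x$ again, we have $x\in\mathfrak B_1$. Since every open neighborhood of $x$ then meets $\mathfrak B_1$, the definition of frequent hypercyclicity for $\mathfrak B_1$ forces $N_B(x,U)$ to have positive lower density for every open $U\ni x$, so $x$ is automatically frequently recurrent.

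For supercyclicity, given $y\in X$ and $\varepsilon>0$, I would first approximate $y$ within $\varepsilon/2$ by a finitely supported $\tilde y=\sum_{j=0}^N y_j e_j$, using that $(e_n)$ is a Schauder basis. I then choose a scalar $M>\max_{0\le j\le N}|y_j|$, so that $\tilde y/M\in\mathfrak B_1$, and consider the open set $U=\{z\in X:\|Mz-\tilde y\|<\varepsilon/2\}$, which is open by continuity of scalar multiplication and of the $F$-norm. Since $\tilde y/M\in U\cap\mathfrak B_1$, frequent hypercyclicity of $x$ for $\mathfrak B_1$ yields a positive-density set of indices $n$ with $B^n x\in U$; the triangle inequality then gives $\|MB^nx-y\|<\varepsilon$, proving that $\{\lambda B^n x:\lambda\in\mathbb K,\,n\ge 0\}$ is dense in $X$, i.e.\ that $x$ is supercyclic.

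I do not anticipate a genuine obstacle here; the only subtle point is that the rescaled target $\tilde y/M$ must actually land in $\mathfrak B_1$, which is precisely what forces the preliminary truncation of $y$ to the finitely supported $\tilde y$ before the scalar $M$ is chosen. The hypothesis that $B^n x$ approximates \emph{every} vector of $\mathfrak B_1$, rather than merely $x$ itself as plain frequent recurrence would yield, is exactly what powers this rescaling trick and distinguishes the present conclusion from Theorem \ref{0-1 law frequent recurrence}.
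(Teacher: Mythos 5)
Your proposal is correct and follows essentially the same route as the paper: the paper's proof simply invokes the clipping argument from the proof of Theorem \ref{thm: caract S_1(A)} to get a frequently hypercyclic vector for $\mathfrak B_1$ lying in $\mathfrak B_1$ itself, and then observes that such a vector is by definition frequently recurrent and supercyclic. The only difference is that you spell out the truncate-and-rescale argument for supercyclicity, which the paper leaves implicit; that detail is exactly right.
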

\begin{proof}
If $B$ is frequently hypercyclic for $\mathfrak B_1$, then the proof of Theorem \ref{thm: caract S_1(A)} shows that there is a vector $x\in \mathfrak B_1$, which is frequently hypercyclic for $\mathfrak B_1$. This vector is by definition, frequently recurrent and supercyclic.
\end{proof}

The other case that is worth to consider is when
condition ii) of equivalence (2) in Theorem \ref{Characterization frequent recurrence} is smaller than $\min\{\varepsilon_p,\varepsilon_q\}$. Assuming that $X$ is Banach we obtain an equivalent notion to frequent hypercyclicity.

\begin{proposition}\label{prop: condicion con el min}
    Let $X$ be a Banach space with unconditional basis $(e_n)_{n\ge 0}$. Suppose that there are a subsets $A_p\subseteq \zN$ of positive lower density, a sequence $(N_k)_k$ defined by the relation $N_k=\min_{n>N_{k-1}} \{n\in \bigcup_{p=0}^{k-1}\bigcup_{j=0}^p A_p+N_j\}$ and $\varepsilon_p\to 0$, such that
\begin{enumerate}
    \item [i)]for every $p\in\mathbb N_0$, $\sum_{n\in A_p}e_{n+p}$ converges 
    \item [ii)]
    For every $q\in\mathbb N_0$ and every  $m\in A_q$,
    $$\|\sum_{p=1}^\infty \sum_{n\in A_p, n>m} \sum_{j=0}^p e_{n-m+N_j}\|<\min\{\varepsilon_p,\varepsilon_q\}.$$
   \end{enumerate}
   Then $B$ is frequently hypercyclic.
\end{proposition}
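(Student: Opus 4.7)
Plan. My approach is to derive the Bonilla–Grosse-Erdmann criterion for frequent hypercyclicity (Theorem \ref{caracterizacion hiperciclicidad frecuente}) from the hypotheses, and then run a construction in the spirit of the proofs of Theorems \ref{0-1 law frequent recurrence} and \ref{Characterization frequent recurrence}. The essential new feature here, compared to the characterization of frequent recurrence in Theorem \ref{Characterization frequent recurrence}, is the tightening of the right-hand side from $\varepsilon_q$ to $\min\{\varepsilon_p,\varepsilon_q\}$; this minimum is precisely what is needed to make a pasting construction work with a dense sequence of target vectors (and not merely with a single recurrence point).

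The first step is to apply unconditionality of the basis together with \eqref{incondicionalidad} to extract, from the condition in (ii), the individual block bound
$$\Big\|\sum_{n\in A_p,\,n>m}e_{n-m+N_j}\Big\|\le C\min\{\varepsilon_p,\varepsilon_q\},\qquad 0\le j\le p,\ m\in A_q,$$
where $C$ is the unconditional constant of the basis; absorbing $C$ into a null sequence $\varepsilon_p\rightsquigarrow C\varepsilon_p$ gives precisely the shape of condition (ii) in Theorem \ref{caracterizacion hiperciclicidad frecuente} at the positions $N_j$. Next, I would apply the Separation Lemma \ref{separation lemma} to the $A_p$'s so that $|n-m|>N_p+N_q$ whenever $n\in A_p$, $m\in A_q$, $n\ne m$. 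I would then fix a dense sequence $(y^p)_{p\ge 1}$ in $X$ with norms controlled so that $\sum_p\|y^p\|\,\varepsilon_p<\infty$, and build
$$y=\sum_{p=1}^\infty\sum_{n\in A_p}\sum_{j=0}^p (y^p)_{N_j}\,e_{n+N_j},$$
whose convergence in the Banach norm is guaranteed by the block bound above and the Banach completeness. Separation forces the $n<m$ contributions in $B^m y$ to vanish, and, for $m\in A_q$, isolates the main part $\sum_{j=0}^q(y^q)_{N_j}e_{N_j}$ coming from $n=m$ in the $p=q$ block; the remaining tail is controlled by
$$\Big\|B^m y - \sum_{j=0}^q (y^q)_{N_j}e_{N_j}\Big\|\le \sum_{p=1}^\infty \|y^p\|_\infty\,\min\{\varepsilon_p,\varepsilon_q\},$$
and the $\min$ structure together with $\varepsilon_p\to 0$ makes this tail uniformly small as $q\to\infty$, upgrading frequent recurrence to frequent hypercyclicity on a dense subset of targets.

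The main obstacle is that the pasted vector naturally produces orbits whose main parts are supported on the positions $\{N_k\}$ rather than on all of $\mathbb N_0$, so a priori one obtains frequent hypercyclicity only onto $\overline{\mathrm{span}}\{e_{N_k}\}$. Reconciling the indices $N_j$ appearing in the hypothesis with the indices $j$ required by the Bonilla–Grosse-Erdmann characterization is the delicate step: it is here that both the Banach hypothesis (a single norm, so that the $\min$ bound translates into a genuine summability argument across the block decomposition) and the positive lower density of $A=\{N_k\}$ must be used, either by a careful re-indexing that fits the hypothesis into the Bonilla–Grosse-Erdmann shape, or by a direct argument showing that arbitrary targets are approximated thanks to the freedom in choosing $(y^p)$ and the richness of the set $A$.
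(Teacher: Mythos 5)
You correctly identify both the right reduction (verify the Bonilla--Grosse-Erdmann condition of Theorem \ref{caracterizacion hiperciclicidad frecuente}) and the exact sticking point: the hypothesis only controls blocks $\sum_{n\in A_p,\,n>m}e_{n-m+N_j}$ at the positions $N_j$, whereas the criterion demands $\sum_{n\in A_p,\,n>m}e_{n-m+j}$ for \emph{every} $0\le j\le p$, and your pasted vector consequently only approximates targets supported in $\{N_k\}$, i.e.\ you only reach frequent hypercyclicity for $\overline{\operatorname{span}}\{e_{N_k}\}$. But you do not close this gap: your final paragraph merely asserts that the Banach hypothesis and the density of $A$ ``must be used, either by a careful re-indexing \dots or by a direct argument,'' without supplying either. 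As it stands the argument proves something strictly weaker than the claim, so this is a genuine gap, and it is precisely the point where the proposition has content beyond Theorem \ref{Characterization frequent recurrence}.

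The paper closes the gap with a short observation that your write-up is missing: for $0\le j\le p$ one has
$$\sum_{n\in A_p,\,n>m}e_{n-m+j}=B^{N_p-j}\Bigl(\sum_{n\in A_p,\,n>m}e_{n-m+N_p}\Bigr),$$
since $N_p\ge p\ge j$. Unconditionality bounds the inner block by $C\min\{\varepsilon_p,\varepsilon_q\}$ (as in your first step), and continuity of $B$ on the Banach space then gives the bound $C\|B\|^{N_p}\min\{\varepsilon_p,\varepsilon_q\}$ for \emph{all} intermediate positions $j$ at once. Passing to a subsequence of the $A_p$'s so that $C\|B\|^{N_p}\varepsilon_p<2^{-p}$ absorbs the factor $\|B\|^{N_p}$, yielding exactly condition (2) of Theorem \ref{caracterizacion hiperciclicidad frecuente}; this is where the Banach (single-norm) hypothesis is genuinely used. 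Once that is done, no pasting construction is needed --- Theorem \ref{caracterizacion hiperciclicidad frecuente} concludes directly, so your second and third paragraphs re-derive machinery that the cited theorem already provides. If you want to salvage your own route, you need to supply an argument of this type (or some other mechanism producing the shifts $e_{n-m+j}$ for all $j\le p$); density of $A$ alone will not do it, since $\overline{\operatorname{span}}\{e_{N_k}\}$ can be a proper subspace no matter how large $\underline{dens}(A)$ is.
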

\begin{proof}
By Theorem \ref{caracterizacion hiperciclicidad frecuente} it suffices to prove that there are $A_p$ of positive lower density satisfying i) and the stronger condition
$$\|\sum_{n\in A_p,n>m} e_{n-m+j}\|<\min\{\varepsilon_p,\varepsilon_q\}.$$
We will prove that this holds for a subsequence of $A_p$.

Indeed, considering a subsequence of $A_p$ and using that the basis is unconditional, we may suppose $C\|B\|^{N_p}\varepsilon_p<\frac{1}{2^p}$. Hence, if $p,q\in\zN$ and $m\in A_q$ then
\begin{align*}
  \|\sum_{n\in A_p, n>m} e_{n-m+j}\|&=\|B^{N_p-j}\left(\sum_{n\in A_{p},n>m} e_{n-m+N_p}\right)\|\\
  &\leq C\|B\|^{N_p}\|\sum_{p=1}^\infty \sum_{n\in A_p,n>m}\sum_{j=0}^p e_{n-m+N_j}\|\\
  &\leq C\|B\|^{N_p}\min\{\varepsilon_p,\varepsilon_q\}\leq \min \{\frac{1}{2^p},\frac{1}{2^q}\}.
\end{align*}
\end{proof}

\section{Bilateral backward shifts}\label{section:bilateral}
We now focus  our attention to bilateral backward shifts and extend the results of the preceding section to the bilateral setting. Although all the results in Section \ref{section unilateral} have an extension to the bilateral case, we choose to mention only the main ones.
\begin{theorem}\label{0-1 law frequent recurrence bil}
Let $X$ be a Fr\'echet sequence space with uncondtional basis $(e_n)_{n\in\mathbb Z}$. If $B$ supports a non-trivial frequently recurrent vector then $B$ is frequently recurrent.
\end{theorem}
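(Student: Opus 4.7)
The plan is to follow the structure of the unilateral zero-one law, Theorem~\ref{0-1 law frequent recurrence}: starting from a non-zero frequently recurrent vector $x$, construct for every $R\in\mathbb Z$ and every $\varepsilon>0$ a frequently recurrent vector $y$ with $\|y-e_R\|<\varepsilon$, since the opening density argument of that proof carries over unchanged. After replacing $x$ by an appropriate iterate so that $|x_R|>\delta>0$, I would choose sequences $\varepsilon_p\to 0$ and $k_p\to\infty$, select sets $A_p\subseteq\{n:\|B^n x-x\|_{k_p+m_p}<\varepsilon_p\delta/(8C_p)\}$ of positive lower density, and apply the Separation Lemma~\ref{separation lemma} to obtain pairwise disjoint $A_p$ with $|n-m|>k_p+k_q+2|R|$. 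The inductive definition of $N_k$ and the pasting construction of $y^k$ transfer verbatim to $\mathbb Z$-indexing, producing the limit
\begin{equation*}
y = e_R + \sum_{p=0}^\infty\sum_{n\in A_{N_p}}\sum_{j=0}^p e_{n+N_j+R}, \qquad \|y-e_R\|<\varepsilon.
\end{equation*}

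\textbf{The new bilateral features and the main obstacle.} The proof diverges from the unilateral case when one verifies that $y$ is frequently recurrent. For $m\in A_{N_q}$, subtracting a finite truncation of $y$ now gives
\begin{equation*}
B^m y - \sum_{j=0}^q e_{N_j+R} = e_{R-m} + \sum_{p=0}^\infty\sum_{\substack{n\in A_{N_p}\\ n\neq m}}\sum_{j=0}^p e_{n-m+N_j+R}.
\end{equation*}
Two new features appear. First, the inner sum runs over $n\neq m$ rather than $n>m$, because $B^m$ no longer annihilates basis vectors whose shifted index becomes negative; the $n<m$ portion must be estimated as well. Second, the extra term $e_{R-m}$ arises because $B^m e_R\neq 0$. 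The main obstacle is bounding the isolated vector $e_{R-m}$ in $\|\cdot\|_q$: a priori its norm need not tend to zero with $m$, so the estimate must be extracted from the recurrence of $x$ rather than from any decay of the basis.

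\textbf{Handling both new terms via a symmetric truncation.} Both features are absorbed by using the bilateral truncation $P_K^{\mathrm{bil}} x := \sum_{|\ell|\le K} x_\ell e_\ell$ in place of the forward truncation $\sum_{\ell=0}^{k_{N_q}} x_\ell e_\ell$ from the unilateral proof. The enhanced separation $|n-m|>k_{N_p}+k_{N_q}+2|R|$, combined with $k_{N_p}>N_p\ge N_j$, forces $|n-m+N_j+R|>k_{N_q}$ for every $n\in A_{N_p}$ with $n\neq m$ and every $0\le j\le p$, so that $P_{k_{N_q}}^{\mathrm{bil}} x$ vanishes at each such index. Exactly as in the unilateral argument, the identity
\begin{equation*}
e_{n-m+N_j+R} = \frac{1}{x_{n+N_j+R}}\bigl[B^m x - P_{k_{N_q}}^{\mathrm{bil}} x\bigr]_{n-m+N_j+R}\, e_{n-m+N_j+R}
\end{equation*}
(valid because recurrence gives $|x_{n+N_j+R}|>\delta/4$) together with the unconditionality inequality \eqref{incondicionalidad} bounds the double sum by $\tfrac{4C_q}{\delta}\bigl(\|B^m x-x\|_{m_{N_q}}+\|x-P_{k_{N_q}}^{\mathrm{bil}} x\|_{m_{N_q}}\bigr)$. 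The same trick applies to the single vector $e_{R-m}$: for $m$ so large that $|R-m|>k_{N_q}$, the truncation vanishes at $R-m$ while $[B^m x]_{R-m}=x_R$, so
\begin{equation*}
e_{R-m} = \frac{1}{x_R}\bigl[B^m x - P_{k_{N_q}}^{\mathrm{bil}} x\bigr]_{R-m}\, e_{R-m},
\end{equation*}
and \eqref{incondicionalidad} yields $\|e_{R-m}\|_q\le \tfrac{C_q}{\delta}\|B^m x-P_{k_{N_q}}^{\mathrm{bil}} x\|_{m_{N_q}}$. Both norms can be made arbitrarily small by the choice of $A_p$ and $k_p$, so a cofinite subset of $A_{N_q}$ is contained in $N_B(y,V)$ for every sufficiently small neighborhood $V$ of $y$, and $y$ is frequently recurrent.
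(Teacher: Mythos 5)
Your proposal is correct and follows essentially the same route as the paper's proof: the same pasting construction of $y$, the same decomposition of $B^m y$ into the extra term $e_{R-m}$ plus the double sum over $n\neq m$, and the same device of a symmetric truncation $\sum_{|j|\le k_{N_q}}x_je_j$ combined with the division by $x_{n+N_j+R}$ (and by $x_R$ for the isolated term) to reduce everything to $\|B^mx-x\|$ plus a tail of $x$. The only cosmetic difference is the constant ($\delta/8$ versus the paper's $\delta/10$, adjusted there to absorb the additional $C_q/\delta$ contributed by $e_{R-m}$), which does not affect the argument.
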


\begin{proof}
 Let $\varepsilon>0$, $R\in\mathbb Z$, we show that there is a frequently recurrent vector $y\in X$ such that $\|y-e_R\|<\varepsilon$.

Let $x\neq 0$ be a frequently recurrent vector such that $x_R\neq 0$. Let $\delta\in(0,1)$ such that $|x_R|>\delta.$

Since $x$ is a recurrent vector,  there is for every $p\in\mathbb N_0$ a  $q_p>p+|R|$ such that $|x_{q_p}|>\delta$. By the continuity of the coordinate functionals, there are a decreasing sequence of positive numbers $(\varepsilon_{p})_p$ and a sequence of natural numbers $(k_p)_p$  satisfying that for each $p\in\mathbb N_0$, $\varepsilon_{p}<\frac{\varepsilon}{2^{p+1}}$, $k_p>p+1$, and such that  
\begin{align}\label{eq: beta_p (zero one freq) BILAT}
 \begin{split}
  \|w\|_{k_p}<\varepsilon_{p}\Longrightarrow  |w_{j}|<\frac{\delta}{4} ,  \text{ for every } |j|\leq q_p, &\quad \text{and}\\
 \|x-\sum_{|j|\le k_p} x_j e_j\|_{m_p}<\frac{\varepsilon}{2^{p+1}}\frac{\delta}{10C_p},&
 \end{split}
 \end{align}
 where $C_p$ and $\|\cdot\|_{m_p}$ are the unconditional constant basis and seminorm associated to $\|\cdot\|_p$.

For each $p\ge 0$ we consider 
\begin{equation}\label{bilateral eq: A_p zero one freq}
A_p\subseteq N_B(x,U_p) \text{ where } U_p=\{y\in X:\|y-x\|_{k_p+m_p}<\frac{\delta\varepsilon_p}{10C_p}\},
\end{equation}
and the $A_p$ have positive lower density.
Applying the Separation Lemma \ref{separation lemma}
we may assume that the sets $A_p$ are pairwise disjoint and that for every $n\in A_p,\, m\in A_q$ with $n\neq m$ we have that
\begin{equation}\label{bilateral eq separation zero one freq}
|n-m|>k_p+k_q+2|R|>p+1+q+1+2|R|.    
\end{equation}

 By \eqref{eq: beta_p (zero one freq) BILAT} and \eqref{bilateral eq: A_p zero one freq} we have that $|x_{n+q_p}-x_{q_p}|<\frac{\delta}{4}$ and hence $|x_{n+q_p}|>\frac{\delta}{2}$ for each $n\in A_p.$ By unconditionality this implies that $\sum_{n\in A_p} e_{n+q_p}$ converges and by applying the backward shift $q_p-p$ times it follows that $\sum_{n\in A_p} e_{n+p+1}$ converges. By the finite invariance property of  the sets of positive lower density, we may assume that $A_p\sub [k_p+|R|+1,\infty)$ and that,  for every $|j|\le p+1,$
\begin{equation}\label{bilateral eq: sumas de e_j norma chica (zero one freq)}
 \|\sum_{n\in A_p} e_{n+j}\|<\frac{\varepsilon}{(p+1)2^{p+1}}.   
\end{equation}

Applying Proposition \ref{prop: condic necesaria vectores recurrentes} and Remark \ref{rem:M_k>N_k} there is a sequence $(M_p)_p$ for which 
\begin{equation}\label{eq: bilateral rat en U_0}
    RAT((A_{M_p})_p)\subseteq N_B(x,U_0)
\end{equation} and such that $M_p>N_p$ for every $p\geq 1$. The fact that $M_p>N_p$ together with the contention $A_p\sub [k_p+|R|+1,\infty)$, implies that $(A_{M_p})_p$ is compatible, i.e. 
 $RAT((A_{M_p})_p)=\{N_k\}$.

Let $$y=\sum_{n\in RAT((A_{M_p})_p)} e_{n+R}=e_R+\sum_{p=0}^{\infty}\sum_{n\in A_{M_p}}\sum_{j=0}^pe_{n+N_j+R},$$
which we claim to be frequently recurrent.

Using \eqref{bilateral eq: sumas de e_j norma chica (zero one freq)} and applying the same arguments as in \eqref{eq: def y^{k+1}} it can be shown that $y\in X$ and that $\|y-e_R\|<\varepsilon$.

We will show now that $y$ is frequently recurrent. 
 Fix $q\in\zN$ and $m\in A_{M_{q}}$.
 
Noting that $m$ belongs only to $A_{M_{q}}$, because the sets $A_{M_p}$ are pairwise disjoint, we get that
\begin{align*}
B^m(y)-\sum_{j=-|R|}^{N_{q}+|R|}y_je_j &=B^m(y)-\sum_{j=0}^{q}e_{N_j+R}\\
&=e_{R-m}+\sum_{p\neq q}\sum_{n\in A_{M_p}}\sum_{j=0}^{p}e_{n-m+N_j+R}+\sum_{n\in A_{M_{q}}}\sum_{j=0}^{q}e_{n-m+N_j+R}-\sum_{j=0}^{q}e_{N_j+R}\\
&=e_{R-m}+\sum_{p=0}^\infty \sum_{n\in A_{M_p},n\neq m}\sum_{j=0}^pe_{n-m+N_j+R}.
\end{align*}

By \eqref{eq: bilateral rat en U_0} we have that $RAT((A_{M_p})_p)\subseteq N_B(x,U_0)$. Thus, we have that for every $n\in A_{M_p}$ and every $0\le j\le p,$ that
$$
B^{n+N_j}x\in U_0,\quad\text{and in particular,}\quad \| B^{n+N_j}x -x\|_{k_0+m_0}<\varepsilon_0.
$$
Then since $|x_R|>\delta,$ we have by \eqref{eq: beta_p (zero one freq) BILAT} that 
\begin{equation*}
    |x_{n+N_j+R}|\ge |x_R|-|x_{n+N_j+R}-x_R| \ge \delta - \frac{\delta}{4}>\frac{\delta}{4}.
\end{equation*}

 Moreover, since by \eqref{bilateral eq separation zero one freq}, $|n-m|>N_p+k_{N_{q}}+|R|$ whenever $n\in A_{M_p}$,  it follows that the $|n-m+N_j+R|>k_{N_{q}}$ for each $0\le j \le p$. Note also that since $\min A_{M_q}>k_{M_q}+|R|$ we have $R-m<-k_{M_q}\leq- k_{N_q}$, and thus,
  \begin{align*}
 \begin{split}
  \|B^m(y)-\sum_{j=-|R|}^{N_{q}+|R|}y_je_j \|_{q}&=\left\|e_{R-m}+\sum_{p=0}^\infty \sum_{n\in A_{M_p},n\neq m}\sum_{j=0}^pe_{n-m+N_j+R}\right\|_{q}\\
&= \left\|\frac1{x_{R}}x_{R}e_{R-m}+\sum_{p=0}^{\infty}\sum_{n\in A_{M_p},n\neq m}\sum_{j=0}^p\frac1{x_{n+N_j+R}}x_{n+N_j+R}e_{n-m+N_j+R}\right\|_{q}\\
 &\le \left\|\frac1{x_{R}}[B^m(x)-\sum_{j=-k_{N_{q}}}^{k_{N_{q}}}x_je_j]_{R-m}e_{R-m}\right\|_q\\
 &+\left\|\sum_{p=0}^{\infty}\sum_{n\in A_{M_p},n\neq m}\sum_{j=0}^p\frac1{x_{n+N_j+R}}[B^m(x)-\sum_{j=-k_{N_{q}}}^{k_{N_{q}}}x_je_j]_{n-m+N_j+R}e_{n-m+N_j+R}\right\|_q\\
 &\leq \Big(\frac{C_q}{\delta}+\frac{4C_q}{\delta}\Big)\left(\|B^m(x)-x\|_{m_q}+\|x-\sum_{j=-k_{N_{q}}}^{k_{N_{q}}}x_je_j\|_{m_q}\right)< \frac{\varepsilon}{2^{{q}}},
  \end{split}
 \end{align*}
where the last inequality follows from \eqref{eq: beta_p (zero one freq) BILAT} and \eqref{bilateral eq: A_p zero one freq}.
 This implies that $y$ is a frequently recurrent vector.
 
\end{proof}

In the same way as in the unilateral case we can deduce a characterization for  bilateral frequently recurrent backward shifts.
 
\begin{theorem}\label{Characterization frequent recurrence bil}
Let $X$ be a Banach space with unconditional basis $(e_n)_{n\in\mathbb Z}$. The following are equivalent: 
\begin{enumerate}
\item $B$ is frequently recurrent and
\item
 there are a sequence $\varepsilon_p\to 0$ and sets of positive lower density $A_p$ such that
\begin{enumerate}
    \item [i)]for every $p\in\mathbb N_0$, $\sum_{n\in A_p}e_{n+p}$ converges 
\end{enumerate}
and such that one of the following conditions holds
\begin{enumerate}
    
    \item [ii)]For every $q\in \zN$ and every $m\in A_q$ we have that
    $$\Big\|\sum_{n\in RAT((A_p)), n\neq m} e_{n-m}\Big\|_q<\varepsilon_q;$$

\end{enumerate}
\item There is a compatible sequence $(A_p)_p$ formed by sets of positive lower density such that
$$\sum_{n\in RAT((A_p)_p)} e_n \text{ is a frequently recurrent vector}.$$
\end{enumerate}

\end{theorem}
\begin{remark}\label{rem: condition ii' bil}
Condition $ii)$ can also  be replaced by:
    \begin{itemize}
    \item [ii')]
     If $N_j$ is the sequence defined by $N_k=\min_{n>{N_{k-1}}} \bigcup_{p=0}^{k-1} \bigcup_{j=0}^p A_p+N_j$ and $N_0=0$, then 
    for every $q\in\mathbb N_0$ and every  $m\in A_q$ we have that
    $$\|\sum_{p=0}^\infty \sum_{n\in A_p, n\neq m} \sum_{j=0}^p e_{n-m+N_j}\|_q<\varepsilon_q$$
    
    \end{itemize}
    
\end{remark}

\begin{proof}[Sketch of the proof of Theorem \ref{Characterization frequent recurrence bil}.]
As in the unilateral case, the implications $(1)\Rightarrow (2)$ and $(1)\Rightarrow (3)$ can be extracted from the proof of  Theorem \ref{0-1 law frequent recurrence bil}. By the zero-one law Theorem for bilateral shifts we have that $(3)\Rightarrow (1).$

(2)$\Rightarrow$ (1).  
 By Theorem \ref{0-1 law frequent recurrence bil} it suffices to construct a nonzero frequently recurrent vector.
Let $(N_j)_j$ be the sequence associated to $RAT((A_p)_p)$ (Or just $N_j$ if we are assuming ii')). 
 
 Applying the Separation Lemma \ref{separation lemma} and finite invariance there are subsets $A_p'\subseteq A_p$ such  that,  for every $p,q\in\zN$, every $n\in A_p'$ and $m\in A_q'$, and every $0\leq j\leq p$, 
\begin{equation}\label{separation property characterization bil}
|n-m|>p+q,\qquad \text{
 $A_p'\subseteq [N_p+1,\infty)$, }\qquad \|\sum_{n\in A_p'} e_{n+j}\|\leq \frac{1}{2^p}.    
\end{equation}

We apply now Lemma \ref{lem: RAT subconjuntos} to obtain a sequence $(M_p)_p$ such that $(A_{M_p}')_p$ is compatible and $RAT((A_{M_p}')_p)\sub RAT((A_p)_p)$.
 Moreover, if $RAT((A_{M_p}')_p)=(N_j')_j$, $N_{l}'=\min_{n>N_{l-1}'} \bigcup_{p=0}^{l-1} \bigcup_{j=0}^p A_{M_p}'+N_j'$ for every $l$, we may assume that 
\begin{enumerate}
   \item[a)] $M_{l}>\max\{m_{l},N_l'\}$ for every $1\leq l$, (see Remark \ref{rem: cosas del lemma que usamos})  
   \item [b)] $\varepsilon_{M_l}<\varepsilon_{M_{l-1}}$ for $1\leq l$, 
    \item [c)] $C_l\cdot\varepsilon_{M_l}<\frac{1}{l}$ for every $1\leq l$,
    \item [d)] $\{N_1',\ldots N_k'\}\subseteq \{N_1,\ldots N_k\}$,
\end{enumerate}
where $C_l,m_l$ are the constants from the unconditionality.

Either if ii) or ii') holds, we have by unconditionality that
\begin{align}
\nonumber \left\|\sum_{\overset{N\in RAT((A_{M_p}')_p)}{N\neq m}} e_{N-m}\right\|_q & =    \left\|\sum_{p=0}^\infty\sum_{\overset{n\in A_{M_{p}}'}{ n\neq m}}\sum_{j=0}^p e_{n-m+N_j'}\right\|_q \leq C_q\cdot \left\|\sum_{p=0}^\infty\sum_{\overset{n\in A_{M_p}}{n\neq m}}\sum_{j=0}^{M_p} e_{n-m+N_j}\right\|_{m_q}\\
 \label{eq: norma de la suma n>m <1/q BIL}   & \leq C_q\cdot\left\|\sum_{p=0}^\infty\sum_{\overset{n\in A_{M_p}}{n\neq m}}\sum_{j=0}^{M_p} e_{n-m+N_j}\right\|_{M_q}
     <C_q\cdot \varepsilon_{M_{q}}
    <\frac{1}{q}. 
\end{align}
Let $y=\sum_{p=0}^\infty \sum_{n\in A_{M_p}'}\sum_{j=0}^p e_{n+N_j'}.$ This vector is well-defined by \eqref{separation property characterization bil} and it is frequently recurrent because if $m\in A'_{M_q}$ then
 $$B^m(y)-\sum_{j=0}^{N_q'} y_j=B^m(y)-\sum_{j=0}^q e_{N_{j}'}= \sum_{p=0}^\infty \sum_{n\in A_{M_p'}, n\neq m} \sum_{j=0}^p e_{n-m+N_j'}$$
 and,   by \eqref{eq: norma de la suma n>m <1/q BIL},
 $$
 \Bigg\|\sum_{p=0}^\infty \sum_{n\in A_{M_p'}, n\neq m} \sum_{j=0}^p e_{n-m+N_j'}\Bigg\|_q<\frac{1}{q}.
 $$
\end{proof}

Proceeding as in Theorems  \ref{thm: caract S_1(A)} and \ref{cyclic and frequently recurrent vector}, but using now the zero-one law from Theorem \ref{0-1 law frequent recurrence bil} and the characterization for bilateral backward shifts in Theorem \ref{Characterization frequent recurrence bil}, we can also prove the following results.   
\begin{theorem} \label{thm: caract S_1(A) bilat}
Let $X$ be a Fr\'echet sequence space with unconditional basis $(e_n)_{n\in \mathbb Z}$. Then $B$ is frequently recurrent if and only if there are $A\subseteq \mathbb N_0$ with $\underline {dens}(A)>0$ and $x\in X$ supported in $A$ such that $x$ is a frequently hypercyclic vector for $\mathfrak B_1(A)$. 
\end{theorem}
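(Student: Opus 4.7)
The plan is to mirror the argument for the unilateral Theorem \ref{thm: caract S_1(A)}, substituting the bilateral tools proved in this section: the zero-one law (Theorem \ref{0-1 law frequent recurrence bil}) and the characterization of bilateral frequent recurrence (Theorem \ref{Characterization frequent recurrence bil}).

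For the sufficient direction, suppose such a set $A\subseteq \mathbb N_0$ and $x\in X$ exist. Define $P_{\mathfrak B_1(A)} x := \sum_{n\in A} \min\{|x_n|,1\}\, sgn(x_n)\, e_n$. By unconditionality, for any $y\in \mathfrak B_1(A)$ and $q\in \mathbb N_0$,
\[
\|B^m(P_{\mathfrak B_1(A)} x)-y\|_q \le C_q \|B^m(x)-y\|_{m_q},
\]
so $P_{\mathfrak B_1(A)}x$ is also frequently hypercyclic for $\mathfrak B_1(A)$. Since $P_{\mathfrak B_1(A)} x$ itself belongs to $\mathfrak B_1(A)$, it is a nonzero frequently recurrent vector, and the bilateral zero-one law (Theorem \ref{0-1 law frequent recurrence bil}) concludes that $B$ is frequently recurrent.

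For the necessary direction, assume $B$ is frequently recurrent and apply Theorem \ref{Characterization frequent recurrence bil} to obtain $A=\{N_k\}$ with $N_0=0$, subsets $A_p\subseteq A$ of positive lower density and $\varepsilon_p\to 0$ satisfying conditions $i)$ and $ii)$. Use the Separation Lemma \ref{separation lemma} to enforce $|n-m|>p+q$ whenever $n\in A_p,\ m\in A_q,\ n\ne m$, then pass to subsequences and apply finite invariance so that $\|\sum_{n\in A_p} e_{n+N_p}\|\le \frac{1}{(p+1)2^{p+1}}$ for every $p$. This guarantees that $z:=\sum_{p=0}^\infty \sum_{n\in A_p}\sum_{j=0}^{N_p} e_{n+N_j}$ defines an element of $X$. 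Fix a dense sequence $(y^p)_p$ in $\mathfrak B_1(A)$ with $supp(y^p)\subseteq\{0,N_1,\dots,N_p\}$ and $\|y^p\|_\infty\le 1$, and set
\[
y=\sum_{p=0}^\infty \sum_{n\in A_p}\sum_{j=0}^p y^p_j\, e_{n+N_j},
\]
which converges by unconditionality once dominated by $z$.

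It remains to show $y$ is frequently hypercyclic for $\mathfrak B_1(A)$. Fix $q\in\mathbb N_0$ and let $m_q, C_q$ be given by unconditionality. For any $m\in A_{m_q}$, decompose $B^m(y)-y^q$ and observe that the subtraction $-y^q$ exactly cancels the $p=m_q$, $n=m$ block (recall $supp(y^{m_q})\subseteq\{0,N_1,\dots,N_{m_q}\}$), so that
\[
B^m(y)-y^{m_q}=\sum_{p=0}^\infty \sum_{n\in A_p,\, n\ne m}\sum_{j=0}^p y^p_j\, e_{n-m+N_j}.
\]
Here both the $n>m$ and $n<m$ contributions appear (the latter landing at negative indices, which is now legitimate since $X$ is bilateral); this is precisely why the bilateral characterization requires the sum over $n\ne m$ rather than $n>m$. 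By unconditionality and condition $ii)$ of Theorem \ref{Characterization frequent recurrence bil},
\[
\|B^m(y)-y^{m_q}\|_q \le C_q \Big\|\sum_{p=0}^\infty \sum_{n\in A_p,\, n\ne m}\sum_{j=0}^p e_{n-m+N_j}\Big\|_{m_q} < C_q\,\varepsilon_{m_q}.
\]
Since $(y^p)_p$ is dense in $\mathfrak B_1(A)$ and $\underline{dens}(A_{m_q})>0$ for each $q$, this yields that $y$ is a frequently hypercyclic vector for $\mathfrak B_1(A)$ supported in $A$. The main subtlety (rather than obstacle) is the careful bookkeeping at the index $n=m$ so that the cancellation with $-y^{m_q}$ works; everything else is a direct transcription of the unilateral argument.
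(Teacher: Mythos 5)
Your proposal is correct and is essentially the paper's own argument: the paper proves this theorem only by remarking that one proceeds as in the unilateral Theorem \ref{thm: caract S_1(A)}, replacing the unilateral zero-one law and characterization by Theorems \ref{0-1 law frequent recurrence bil} and \ref{Characterization frequent recurrence bil}, which is exactly what you do. You also correctly identify the only real bilateral adjustment, namely that the terms with $n<m$ no longer vanish and are absorbed by the $n\neq m$ sum in condition $ii)$.
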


\begin{theorem}\label{cyclic and frequently recurrent vector bil}
Let $X$ be a Fr\'echet sequence space with unconditional basis $(e_n)_{n\in\mathbb Z}$. Then  $B$ is frequently recurrent if and only if there exists a cyclic and frequently recurrent vector.

In particular, if $B$ is frequently recurrent then
\begin{enumerate}
    \item the set of frequently recurrent vectors is dense lineable and moreover,
    \item the set of frequently recurrent vectors of $B\times\cdots \times B:X^n\to X^n$ is dense lineable for every $n\in\mathbb N.$
\end{enumerate}
\end{theorem}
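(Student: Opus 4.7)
The plan is to mimic the unilateral proof of Theorem~\ref{cyclic and frequently recurrent vector}, substituting the bilateral tools already established. The easy direction is immediate: a cyclic and frequently recurrent vector is in particular a nonzero frequently recurrent vector, so Theorem~\ref{0-1 law frequent recurrence bil} forces $B$ to be frequently recurrent. For the converse, assume $B$ is frequently recurrent. Theorem~\ref{thm: caract S_1(A) bilat} furnishes $A\subseteq\mathbb N_0$ with $\underline{dens}(A)>0$ together with $x\in X$ supported on $A$ that is frequently hypercyclic for $\mathfrak B_1(A)$; by the projection argument used inside the proof of Theorem~\ref{thm: caract S_1(A)}, we may assume $x\in\mathfrak B_1(A)$, which in particular makes $x$ itself frequently recurrent. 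It remains to show that $x$ is cyclic.

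Fix $y=\sum_{j=-N}^{N}y_je_j$ and $\varepsilon>0$, and set $M=\|y\|_\infty+1$. Since $A$ is an infinite subset of $\mathbb N_0$, for each $k\in[-N,N]$ there exists $n\geq N$ with $k+n\in A$; hence we may select indices $n_1,\dots,n_r\geq N$ so that the translates $A_{n_l}:=A-n_l$ cover $[-N,N]$, and decompose $y=\sum_{l=1}^{r}y^l$ with each $y^l$ finitely supported inside $A_{n_l}\cap[-N,N]$ and satisfying $|y^l_j|<M$. For each $l$, right-translating $y^l$ by $n_l$ yields a finitely supported vector $v^l\in X$ whose support lies in $A\subset\mathbb N_0$ (this is precisely where $n_l\geq N$ is needed) and which satisfies $v^l/M\in\mathfrak B_1(A)$ and $B^{n_l}(v^l)=y^l$. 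Frequent hypercyclicity of $x$ for $\mathfrak B_1(A)$ now provides approximations $B^{n}(x)\approx v^l/M$ along a set of $n$'s of positive lower density; applying the continuous operator $B^{n_l}$ to such approximations produces $MB^{n+n_l}(x)\approx y^l$ along the same set of $n$'s. Choosing for each $l$ one such time $k_l$ with $\|MB^{k_l+n_l}(x)-y^l\|<\varepsilon/r$ and summing gives $\|P(B)(x)-y\|<\varepsilon$ for the polynomial $P(z)=M\sum_{l=1}^{r}z^{k_l+n_l}$, thereby establishing cyclicity of $x$.

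The main subtlety relative to the unilateral argument is precisely the constraint $n_l\geq N$: since $A$ is only known to sit inside $\mathbb N_0$ while $y$ may have negative-index support, the right translations $v^l$ land in $\mathfrak B_1(A)$ only when the shift amount $n_l$ is at least $N$, which is why one must exploit that $A$ is infinite to push the translation parameters to the right. Once cyclicity is secured, conclusions~(1) and~(2) are essentially formal. For~(1), the subspace $\{Q(B)x:Q\in\mathbb K[z]\}$ is dense by cyclicity, and each of its nonzero elements is frequently recurrent because $Q(B)$ commutes with $B$ and hence sends frequently recurrent vectors to frequently recurrent vectors. For~(2), we invoke the general principle from \cite[Proposition~3.8]{CardeMurfreqrecurrence_arxiv} and \cite[Proposition~5.9]{grivaux2022questions}, according to which the existence of a single cyclic and frequently recurrent vector implies dense lineability of the set of frequently recurrent vectors for every finite power $B\times\cdots\times B$.
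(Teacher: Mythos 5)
Your proposal is correct and follows essentially the same route as the paper, which does not write out this bilateral proof but explicitly instructs the reader to proceed as in Theorems~\ref{thm: caract S_1(A)} and~\ref{cyclic and frequently recurrent vector} using the bilateral zero-one law (Theorem~\ref{0-1 law frequent recurrence bil}) and Theorem~\ref{thm: caract S_1(A) bilat}. You correctly identify and resolve the one genuinely new point in the adaptation, namely that a target with negative-index support must be covered by translates $A-n_l$ with $n_l$ large enough that the lifted vectors $v^l$ remain supported in $A\subseteq\mathbb N_0$.
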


\section{Final remarks}

An important open question, posed in \cite{BonGroLopPer22JFA}, is whether every frequently recurrent backward shift is indeed frequently hypercyclic. For boundedly complete bases, there is an easy positive answer: by \cite{charpentier2019chaos}, every reiteratively hypercyclic backward shift operator over a boundedly complete basis is frequently hypercyclic, and by \cite{CardeMurBlock} every reiteratively recurrent backward shift operator is reiteratively hypercyclic. However, the proof breaks down for more general spaces. 

Inspired by the results of Subsection \ref{intermediate} we can split this problem into two different questions.

\textbf{Question 1} Let $B$ be frequently hypercyclic for the set of coordinates bounded by $1$. Is $B$ frequently hypercyclic?

\textbf{Question 2} Let $B$ be a frequently recurrent backward shift operator. Is $B$ frequently hypercyclic for the set of coordinates bounded by $1$?

By \cite{GriLop23}, the existence of invariant measures is related to the existence of frequently recurrent vectors in the support of the measure.
It is thus natural to ask if a zero-one type law
holds for invariant measures of weighted shifts: does the existence of a non-trivial invariant measure imply the existence of an invariant measure of full support? 

If $B$ is a backwardshift operator acting on a Banach space $X$ in which $\{e_n\}$ is a boundedly complete and unconditional basis (in this case $B$ is an adjoint operator) then using the zero-one law we can give a positive answer and moreover
$B$ supports a non-trivial invariant measure  if and only if it has an ergodic measure of full support.
This also provides an alternative proof of  \cite[Proposition 2.1]{grivaux2024orthogonality}.

Indeed, according to \cite[Lemma 3.1]{GriLop23} the existence of non-trivial invariant measure implies the existence of a nonzero frequently recurrent vector. By the zero-one law Theorem \ref{0-1 law frequent recurrence} this implies that the operator is frequently recurrent and since $B$ is an adjoint operator then the operator is frequently hypercyclic and chaotic \cite[Theorem 2.1]{charpentier2019chaos}. In particular, it satisfies the frequent hypercyclicity criterion \cite{GroPer11}[See Proposition 9.13 and Theorem 4.8], which in turn implies ergodicity \cite[Theorem 1]{MurPer13}.

For non-adjoint operators or in the case of non-normable Fr\'echet spaces, the situation is more subtle. 
We thus pose the following question.

\textbf{Question 3.} Let $B$ be a backwardshift operator on a Fr\'echet (or Banach) sequence space $X$. Suppose that $B$ has a non-trivial invariant measure. Does $B$ have an invariant measure of full support?

We remark that for an adjoint operator operator on a Fr\'echet space $X$ such that its predual is a quasi-$\ell_\infty$-barreled Hausdorff locally convex topological vector space, sufficient conditions for the existence invariant measures of full support were proved in \cite[Theorem 2.3]{Lop24Locallybounded}, which are related to the existence of frequently recurrent vectors with locally bounded orbit.

Finally we note that the results presented here can be extended to $\mathcal F$-recurrence for families $\mathcal F$ which are finitely invariant and satisfy a separation lemma (like Lemma \ref{separation lemma}, see also \cite{MarMenPui22,BayGri04}). But, for many families, like upper or block families, the results are simpler to prove (see \cite{bonillazerofurstenberg_preprint}).

\subsection*{Acknowledgments} We would like to express our gratitude towards the reviewer, who read carefully the entire manuscript and beside other helpful comments, pointed out a significant gap in the previous version. Their comments motivated us to analyze in detail the arithmetic structure of the return time sets.

 \bibliographystyle{abbrv}

\begin{thebibliography}{10}
\bibitem{abakumov2024several}
E.~Abakumov and A.~Abbar.
\newblock On several dynamical properties of shifts acting on directed trees.
\newblock {\em Canadian Journal of Mathematics}, pages 1--30, 2024.

\bibitem{abakumov2024orbits}
E.~Abakumov and A.~Abbar.
\newblock Orbits of the backward shifts with limit points.
\newblock {\em Journal of Mathematical Analysis and Applications}, 537(2),
  2024.

\bibitem{BayGri04}
F.~Bayart and S.~Grivaux.
\newblock {Hypercyclicity: the role of the unimodular point spectrum.
  (Hypercyclicit\'e : Le r\^ole du spectre ponctuel unimodulaire.)}.
\newblock {\em C. R., Math., Acad. Sci. Paris}, 338(9):703--708, 2004.

\bibitem{BayGri06}
F.~{Bayart} and S.~{Grivaux}.
\newblock {Frequently hypercyclic operators}.
\newblock {\em {Trans. Am. Math. Soc.}}, 358(11):5083--5117, 2006.

\bibitem{BayGri07}
F.~Bayart and S.~Grivaux.
\newblock Invariant {G}aussian measures for operators on {B}anach spaces and
  linear dynamics.
\newblock {\em Proc. Lond. Math. Soc. (3)}, 94(1):181--210, 2007.

\bibitem{BayGriMatMen24}
F.~Bayart, S.~Grivaux, E.~Matheron, and Q.~Menet.
\newblock Hereditarily frequently hypercyclic operators and disjoint frequent
  hypercyclicity.
\newblock {\em Ergodic Theory and Dynamical Systems}, page 1–52, 2025.

\bibitem{BayMat09}
F.~Bayart and E.~Matheron.
\newblock {\em Dynamics of linear operators}, volume 179 of {\em Cambridge
  Tracts in Mathematics}.
\newblock Cambridge University Press, Cambridge, 2009.

\bibitem{BayRuz15}
F.~Bayart and I.~Z. Ruzsa.
\newblock Difference sets and frequently hypercyclic weighted shifts.
\newblock {\em Ergodic Theory Dynam. Systems}, 35(3):691--709, 2015.

\bibitem{Bes16}
J.~{B}\`es, Q.~{Menet}, A.~{Peris}, and Y.~{Puig}.
\newblock {Recurrence properties of hypercyclic operators}.
\newblock {\em {Math. Ann.}}, 366(1-2):545--572, 2016.

\bibitem{bonillazerofurstenberg_preprint}
A.~Bonilla, R.~Cardeccia, K.-G. Grosse-Erdmann, and S.~Muro.
\newblock Zero-one laws for weighted shifts under furstenberg families.
\newblock {\em In preparation}.

\bibitem{bonilla2025zeroEdinburgh}
A.~Bonilla, R.~Cardeccia, K.-G. Grosse-Erdmann, and S.~Muro.
\newblock Zero-one law of orbital limit points for weighted shifts.
\newblock {\em Proceedings of the Edinburgh Mathematical Society}, page 1–34,
  2025.

\bibitem{BonGro18}
A.~{Bonilla} and K.-G. {Grosse-Erdmann}.
\newblock {Upper frequent hypercyclicity and related notions}.
\newblock {\em {Rev. Mat. Complut.}}, 31(3):673--711, 2018.

\bibitem{BonGroLopPer22JFA}
A.~Bonilla, K.-G. Grosse-Erdmann, A.~L{\'o}pez-Mart{\'{\i}}nez, and A.~Peris.
\newblock Frequently recurrent operators.
\newblock {\em J. Funct. Anal.}, 283(12):36, 2022.
\newblock Id/No 109713.

\bibitem{CardeMur22}
R.~Cardeccia and S.~Muro.
\newblock Arithmetic progressions and chaos in linear dynamics.
\newblock {\em Integral Equations Oper. Theory}, 94(2):Paper No. 11, 18, 2022.

\bibitem{CardeMur22multipleScand}
R.~Cardeccia and S.~Muro.
\newblock Multiple recurrence and hypercyclicity.
\newblock {\em Math. Scand.}, 128(3):589--610, 2022.

\bibitem{CardeMurBlock}
R.~Cardeccia and S.~Muro.
\newblock Frequently recurrence properties and block families.
\newblock {\em Rev. R. Acad. Cienc. Exactas F\'{\i}s. Nat. Ser. A Mat. RACSAM},
  119(3):Paper No. 60, 27, 2025.

\bibitem{ChaSec12}
K.~Chan and I.~Seceleanu.
\newblock Hypercyclicity of shifts as a zero-one law of orbital limit points.
\newblock {\em J. Operator Theory}, 67(1):257--277, 2012.

\bibitem{ChSe10}
K.~C. Chan and I.~Seceleanu.
\newblock Orbital limit points and hypercyclicity of operators on analytic
  function spaces.
\newblock {\em Math. Proc. R. Ir. Acad.}, 110A(1):99--109, 2010.

\bibitem{ChaErnMen16}
S.~{Charpentier}, R.~{Ernst}, and Q.~{Menet}.
\newblock {$\Gamma$-supercyclicity.}
\newblock {\em {J. Funct. Anal.}}, 270(12):4443--4465, 2016.

\bibitem{charpentier2019chaos}
S.~Charpentier, K.~Grosse-Erdmann, and Q.~Menet.
\newblock Chaos and frequent hypercyclicity for weighted shifts.
\newblock {\em Ergodic Theory and Dynamical Systems}, pages 1--37, 2019.

\bibitem{CosManPar14}
G.~Costakis, A.~Manoussos, and I.~Parissis.
\newblock Recurrent linear operators.
\newblock {\em Complex Anal. Oper. Theory}, 8(8):1601--1643, 2014.

\bibitem{Fur81}
H.~Furstenberg.
\newblock {\em Recurrence in ergodic theory and combinatorial number theory}.
\newblock Princeton University Press, Princeton, N.J., 1981.
\newblock M. B. Porter Lectures.

\bibitem{GriLop23}
S.~Grivaux and A.~L\'{o}pez-Mart\'{\i}nez.
\newblock Recurrence properties for linear dynamical systems: an approach via
  invariant measures.
\newblock {\em J. Math. Pures Appl. (9)}, 169:155--188, 2023.

\bibitem{GriLop25}
S.~Grivaux, A.~L\'{o}pez-Mart\'{\i}nez, and A.~Peris.
\newblock Questions in linear recurrence {I}: the {$T\oplus T$}-recurrence
  problem.
\newblock {\em Anal. Math. Phys.}, 15(1):Paper No. 1, 26, 2025.

\bibitem{GriLopPerQuestions2}
S.~Grivaux, A.~L\'{o}pez-Mart\'{\i}nez, and A.~Peris.
\newblock Questions in linear recurrence {II}: lineability properties.
\newblock {\em Banach J. Math. Anal.}, 19(4):Paper No. 61, 28, 2025.

\bibitem{grivaux2024orthogonality}
S.~Grivaux, E.~Matheron, and Q.~Menet.
\newblock Orthogonality of invariant measures for weighted shifts.
\newblock {\em Pure and Applied Functional Analysis}, 2024.

\bibitem{Gro19bilateral}
K.-G. Grosse-Erdmann.
\newblock Frequently hypercyclic bilateral shifts.
\newblock {\em Glasg. Math. J.}, 61(2):271--286, 2019.

\bibitem{GroPer11}
K.-G. {Grosse-Erdmann} and A.~{Peris Manguillot}.
\newblock {\em {Linear chaos}}.
\newblock Berlin: Springer, 2011.

\bibitem{he2018jf-class}
S.~He, Y.~Huang, and Z.~Yin.
\newblock {$J^\mathcal F$}-class weighted backward shifts.
\newblock {\em International Journal of Bifurcation and Chaos}, 28(06):1850076,
  2018.

\bibitem{Lop24Locallybounded}
A.~L\'{o}pez-Mart\'{\i}nez.
\newblock Invariant measures from locally bounded orbits.
\newblock {\em Results Math.}, 79(5):Paper No. 185, 30, 2024.

\bibitem{lopez2024recurrent}
A.~L{\'o}pez-Mart{\'\i}nez.
\newblock Recurrent subspaces in {B}anach spaces.
\newblock {\em International Mathematics Research Notices},
  2024(11):9067--9087, 2024.

\bibitem{LopMen25}
A.~L\'{o}pez-Mart\'{\i}nez and Q.~Menet.
\newblock Two remarks on the set of recurrent vectors.
\newblock {\em J. Math. Anal. Appl.}, 541(1):Paper No. 128686, 17, 2025.

\bibitem{lopez2025shifts}
A.~L{\'o}pez-Mart{\'\i}nez and D.~Papathanasiou.
\newblock Shifts on trees versus classical shifts in chain recurrence.
\newblock {\em Journal of Differential Equations}, 433:113230, 2025.

\bibitem{MarMenPui22}
O.~Martin, Q.~Menet, and Y.~Puig.
\newblock Disjoint frequently hypercyclic pseudo-shifts.
\newblock {\em J. Funct. Anal.}, 283(1):Paper No. 109474, 44, 2022.

\bibitem{MurPer13}
M.~Murillo-Arcila and A.~Peris.
\newblock {Strong mixing measures for linear operators and frequent
  hypercyclicity.}
\newblock {\em J. Math. Anal. Appl.}, 398(2):462--465, 2013.

\end{thebibliography}

\end{document}